\documentclass[12pt,a4paper]{amsart}
\usepackage{a4wide}
\usepackage[utf8]{inputenc}
\usepackage{amsmath,amssymb}

\usepackage{xcolor}
\usepackage{booktabs}
\usepackage{hyperref}
\hypersetup{
    colorlinks,
    citecolor=black,
    filecolor=black,
    linkcolor=blue,
    urlcolor=black
}
\usepackage{stmaryrd}
\usepackage{url}
\usepackage{longtable}
\usepackage[figuresright]{rotating}
\usepackage{amsthm}
\usepackage{enumerate}
\usepackage{geometry}
\geometry{margin=1in}

\newtheorem{definition}{Definition}
\newtheorem{theorem}[definition]{Theorem}
\newtheorem{proposition}[definition]{Proposition}

\newtheorem{lemma}[definition]{Lemma}

\newtheorem*{claim*}{Claim}

\newcommand{\0}{\emptyset}
\newcommand{\mc}{\mathcal}

\newcommand{\RR}{\mathbb{R}}

\newcommand{\QQ}{\mathbb{Q}}

\newcommand{\IFF}{\Leftrightarrow}
\newcommand{\IMP}{\Rightarrow}
\newcommand{\foralmostall}{\forall^\infty}
\newcommand{\existsinfty}{\exists^\infty}

\newcommand{\Ii}{\mc{I}}
\newcommand{\Jj}{\mc{J}}

\newcommand{\Mm}{\mc{M}}
\newcommand{\Nn}{\mc{N}}

\newcommand{\cm}{\mathfrak{c}}

\newcommand{\bez}{\backslash}
\newcommand{\se}{\subseteq}
\newcommand{\sen}{\subsetneq}
\newcommand{\es}{\supseteq}

\newcommand{\rest}{\restriction}

\newcommand{\baire}{\omega^\omega}

\newcommand{\concat}{^{\frown}}
\newcommand{\eps}{\varepsilon}
\newcommand{\id}{\tn{id}}

\newcommand{\rank}{\tn{rank}}

\newcommand{\tn}[1]{\textnormal{#1}}
\newcommand{\ti}[1]{\textit{#1}}

\newcommand{\add}{\tn{add}}
\newcommand{\cof}{\tn{cof}}

\newcommand{\supp}{\tn{supp}}
\newcommand{\Bor}{\tn{Bor}}

\newcommand{\Perf}{\textnormal{Perf}}

\newcommand{\splitt}{\textnormal{split}}
\newcommand{\Succ}{\textnormal{Succ}}
\newcommand{\succe}{\textnormal{succ}}
\newcommand{\stem}{\textnormal{stem}}
\newcommand{\tips}{\textnormal{tips}}

\def\cc{\mathcal{C}}

\def\cm{\mathcal{M}}
\def\cn{\mathcal{N}}

\def\bbr{\mathbb{R}}

\def\c{\mathfrak{c}}

\def\w{\omega}

\def\baire{\w^\w}

\def\with{{\smallfrown}}
\def\restricted{\upharpoonright}

\def\cof{\rm cof}

\def\then{\longrightarrow}

\def\force{\Vdash}


\title{Mycielski among trees}

\author{Marcin Michalski}
\email{marcin.k.michalski@pwr,edu.pl}
	
\author{Robert Rałowski}
\email{robert.ralowski@pwr.edu.pl}

\author{Szymon Żeberski}
\email{szymon.zeberski@pwr.edu.pl}

\thanks{The work has been partially financed by grant S50129/K1102 (0401/0052/18) from the Faculty of Fundamental Problems of Technology, Wrocław University of Science and Technology.}

\address{Marcin Michalski, Robert Rałowski, Szymon Żeberski, Department of Computer Science, Faculty of Fundamental Problems of Technology, Wrocław University of Technology,	50-370 Wrocław, Poland}

\date{}

\begin{document}

\begin{abstract}
	Two-dimensional version of the classical Mycielski theorem says that for every comeager or conull set $X\se [0,1]^2$ there exists a perfect set $P\se [0,1]$ such that $P\times P\se X\cup \Delta$. We consider generalizations of this theorem by replacing a perfect square with a rectangle $A\times B$, where $A$ and $B$ are bodies of other types of trees with $A\se B$. In particular, we show that for every comeager $G_\delta$ set $G\se \baire\times \baire$ there exist a Miller tree $M$ and a uniformly perfect tree $P\se M$ such that $[P]\times [M]\se G\cup\Delta$ and that $P$ cannot be a Miller tree. In the case of measure we show that for every subset $F$ of $2^{\omega}\times 2^\omega$ of full measure there exists a uniformly perfect tree $P\se 2^{<\omega}$ such that $[P]\times[P]\se F\cup\Delta$ and no side of such a rectangle can be a body of a Silver tree or a Miller tree. We also show some properties of forcing extensions of the real line from which we derive nonstandard proofs of Mycielski-like theorems via Shoenfield Absoluteness Theorem.
\end{abstract}

\maketitle

\section{Introduction and notation}

The motivation of this paper is the following two-dimensional version of classical Mycielski theorem (see \cite{Mycielski}).

\begin{theorem}
	For every comeager or conull set $X\se [0,1]^2$ there exists a perfect set $P\se [0,1]$ such that $P\times P\se X\cup \Delta$, where  $\Delta=\{(x,x): x\in [0,1]\}.$
\end{theorem}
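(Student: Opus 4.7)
The plan is a Cantor-scheme (fusion) construction: inductively pick closed intervals $(I_s)_{s\in 2^{<\omega}}$ with $I_\emptyset=[0,1]$, $I_{s\concat 0},I_{s\concat 1}\se I_s$ disjoint of diameter less than $2^{-|s|-1}$, maintaining the invariant that for every $n$ and every distinct $s,t\in 2^n$ the closed rectangle $I_s\times I_t$ is disjoint from a level-$n$ approximation to $X^c$. The set $P=\bigcap_n\bigcup_{s\in 2^n}I_s$ is then perfect, and any two distinct $x,y\in P$ branch at some level $n_0$ and thereafter lie in distinct $I_s\times I_t$, so the nesting of the approximations forces $(x,y)\in X$.

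In the comeager case I would write $X^c=\bigcup_n F_n$ with $F_n\se[0,1]^2$ closed nowhere dense and increasing, the invariant being $I_s\times I_t\cap F_n=\emptyset$. In the conull case I would take any cover $X^c\se\bigcap_n W_n$ with $W_n$ a countable union of open rectangles of total measure less than $\eps_n$ (so also $\mu(\overline{W_n})<\eps_n$), the invariant being $I_s\times I_t\cap \overline{W_n}=\emptyset$; the values $\eps_n$ are chosen on the fly. For the successor step, given $(I_s)_{s\in 2^n}$, work in the product space $\Pi=\prod_{u\in 2^{n+1}}\intt(I_{\pi(u)})$, where $\pi(u)$ is the parent of $u$. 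For each unordered pair of distinct $u,v\in 2^{n+1}$ the bad cylinder $B_{u,v}=\{(x_w)_w\in\Pi:(x_u,x_v)\in F_{n+1}\}$ (resp.\ $\in\overline{W_{n+1}}$) is the preimage along the projection to coordinates $u,v$ of a small closed set.

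In the comeager case each $B_{u,v}$ is nowhere dense in $\Pi$, and the finite union $\bigcup_{\{u,v\}}B_{u,v}$ is still nowhere dense in the Baire space $\Pi$; picking a tuple $(x_u)_u$ in its complement with the open condition $x_{s\concat 0}\ne x_{s\concat 1}$ and using the closedness of $F_{n+1}$, one takes small pairwise disjoint closed neighborhoods $I_u\ni x_u$ whose rectangles still miss $F_{n+1}$. In the conull case the same argument runs with Lebesgue product measure in place of Baire category: the measure of $B_{u,v}$ is at most $\eps_{n+1}\cdot\prod_{w\ne u,v}|I_{\pi(w)}|$, so taking $\eps_{n+1}$ small enough relative to the (already fixed) interval lengths at level $n$ drives the total bad measure below $\mu(\Pi)$, and one proceeds as before using the closedness of $\overline{W_{n+1}}$.

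The main subtlety is precisely this interaction in the conull case: the admissible smallness of $\eps_{n+1}$ depends on the geometrically shrinking interval lengths at level $n$, so the cover $(W_n)_n$ cannot be fixed in advance but must be refined during the recursion. This is legitimate because the outer measure of $X^c$ is zero, permitting covers by countable unions of open rectangles of arbitrarily small total measure; choosing rectangles (rather than arbitrary open sets) keeps $\mu(\overline{W_n})$ close to $\mu(W_n)$, which is the ingredient that lets the topological-style fusion argument carry over verbatim to the measure case.
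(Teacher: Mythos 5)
Your comeager-case argument is sound and is essentially the standard fusion proof: the bad cylinders $B_{u,v}\subseteq\Pi$ are closed and nowhere dense (preimages of closed nowhere dense sets under open continuous projections), their finite union is nowhere dense, and the closedness of $F_{n+1}$ lets you thicken a good tuple to pairwise disjoint closed subintervals while preserving the invariant.

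The conull case, however, has a genuine gap, located exactly at the point you flagged as the main subtlety. The parenthetical claim that a countable union $W_n$ of open rectangles of small total measure must satisfy $\mu(\overline{W_n})<\eps_n$ is false; more seriously, the invariant $(I_s\times I_t)\cap \overline{W_n}=\emptyset$ can be impossible to satisfy at all. Take $X^c$ to be a dense null $G_\delta$ subset of $[0,1]^2$ (such sets exist, e.g.\ an intersection of small dense open sets around a countable dense family of points). Then \emph{every} open set $W\supseteq X^c$ is dense, so $\overline{W}=[0,1]^2$, no matter how small $\mu(W)$ is and no matter whether $W$ is assembled from rectangles. Hence no nonempty rectangle $I_s\times I_t$ is disjoint from $\overline{W_n}$, and the recursion never gets started. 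The root difficulty is that approximating a null set from outside by open sets gives no control on the closure; the topological mechanism that makes the category case run (a closed nowhere dense set has dense open complement, so one can pick points in that complement and thicken) has no measure-theoretic analogue, so the fusion does not transfer ``verbatim.''

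The paper's proof avoids this by approximating $X$ from the \emph{inside} by an increasing sequence of closed sets $F_n\subseteq X$ with measure tending to $1$, and by replacing ``avoid a bad closed set'' with a Lebesgue density argument. At stage $n$ one maintains a positive-measure set $B_n$ obtained by translating/rescaling and intersecting symmetrized copies of $(I_\sigma\times I_\tau)\cap F_{k_{|\sigma\cap\tau|}}$ over all pairs $\sigma,\tau\in 2^n$; the next branching pair $(x_{n+1},y_{n+1})$ is chosen as a density point of $B_n$ with $x_{n+1}\ne y_{n+1}$. The Lebesgue density theorem then supplies the quantitative leverage your rectangle estimate was meant to provide: in a small enough box around a density point, $B_n$ has relative measure $>1-\eps_{n+1}$, so after intersecting over all $\binom{2^{n+1}}{2}$ pairs the renormalized set $B_{n+1}$ still has positive measure and the construction can continue. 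This inside approximation combined with the density theorem is the extra idea needed for the measure half of the theorem.
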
	
In the Cantor space $2^\omega$ and the Baire space $\omega^\omega$ each perfect set has a natural combinatorial description.
Let $A\in \{2,\omega\}$ and denote $A^{<\omega}=\bigcup_{n\in\omega}A^n$. Let us recall that $T\se A^{<\omega}$ is a tree on $A$  if for each $\sigma\in T$ and every $n\in\omega$ we have $\sigma\rest n\in T$.
A body of a tree $T\se A^{<\omega}$ is the set  ${[T]=\{x\in A^\omega: (\forall n)( x\rest n\in T)\}}$ of infinite branches of $T$.
A tree $T\se A^{<\omega}$ is called a perfect tree (or a Sacks tree), if
	\[
		(\forall \sigma\in T)( \exists \tau\in T)( \sigma\se \tau \land (\exists i, j\in A)(i\neq j\land \tau\concat i,\tau\concat j\in T)).
	\]
Then $P\subseteq A^\omega$ is a perfect set if and only if $P$ is a body of a perfect tree.
\\
A natural question arises whether we may replace perfect trees with another types of trees.
\\
Our general setup will be as follows. We will consider a subset $X$ of $2^\omega\times 2^\omega$ or $\baire\times \baire$, of full measure or comeager, and investigate whether there exist trees $T_1, T_2$ satisfying $T_2\se T_2$ such that $[T_1]\times [T_2]\se X\cup \Delta $, where $\Delta$ denotes a diagonal, i.e. ${\Delta=\{(x,x): x\in S\}}$ and $S$ is the space we work in. Natural examples of considered trees are Miller, Laver, uniformly perfect and Sliver trees.

We adopt the standard set-theoretical notation (see \cite{Jech}). Let $T\se A^{<\omega}$ be a tree on a set $A\in \{2,\omega\}$. We will use the following notions related to trees:
\begin{itemize}
	\item $\succe_T(\sigma)=\{a\in A: \sigma\concat a\in T\}$;
	\item $\splitt(T)=\{\sigma\in T: |\succe_T(\sigma)|\geq 2\}$;
	\item $\Succ_T(\sigma)=\{\tau\in\splitt(T): \sigma\se\tau  \tn{ and } \neg(\exists \tau'\in\splitt(T))(\sigma\se\tau'\subsetneq\tau)\}$;
	\item $\omega$-$\splitt(T)=\{\sigma\in T: |\succe_T(\sigma)|=\omega\}$.
\end{itemize}

A tree $T\se A^{<\omega}$ is called
\begin{itemize}
	\item a Miller or superperfect tree, if $(\forall \sigma\in T)( \exists \tau\in\omega$-$\splitt(T))( \sigma\se \tau)$;
	\item a Laver tree, if $(\exists \sigma)( \forall \tau\in T)(\tau\se\sigma \lor (\sigma\se\tau \land\tau\in\omega$-$\splitt(T)))$.
\end{itemize}
We will denote the shortest splitting node of a given tree $T$ by $\stem(T)$. Nodes $\tau, \sigma\in A^{<\omega}$ are orthogonal (denoted by $\sigma\perp\tau$), if neither $\tau\se\sigma$ nor $\sigma\se\tau$. Sometimes we will be indexing nodes with nodes. In such cases for the sake of brevity we will write e.g. $\tau_{010}$ instead of $\tau_{(0,1,0)}$.
\\
As mentioned above, we will also consider some specific types of perfect trees (see \cite{KorchWeiss}). We call a perfect tree $T\se A^{<\omega}$
\begin{itemize}
	\item uniformly perfect, if for every $n\in\omega$ either $A^n\cap T\se\splitt(T)$ or $A^n\cap \splitt(T)=\0$;
	\item a Silver tree, if $(\forall \sigma,\tau\in T)(|\sigma|=|\tau|\IMP(\forall a\in A)(\sigma\concat a \in T \IFF \tau\concat a\in T))$.
\end{itemize}
Before we proceed let us notice that to provide an example of a comeager subset of $X^2$ which does not contain a rectangle $A\times B$ of sets of certain type, it is enough to show that there exists comeager set $G\se X$ with $A\not\se G$ or $B\not\se G$. Indeed, in such a case $G\times X$ is comeager too (by Kuratowski-Ulam Theorem) and $A\times \{x\}\not\se G\times X$ for every $x\in X$. The same is true for the measure case thanks to Fubini Theorem. This observation gives weight to Propositions \ref{G delta Laver} and \ref{full nie zawierajacy Millera}.

\section{Category Case}
In this section we will focus on finding trees $T_1\se T_2\se \omega^{<\omega}$ of types mentioned in Introduction, satisfying $[T_1]\times [T_2]\se G$ for a given comeager set $G\se \baire\times \baire$. The main positive result is Theorem \ref{Mycieslki 1.5}. Theorem \ref{Miller razy Miller nie} and Propositions \ref{Silver razy Silver nie} and \ref{Miller nie uniformly perfect} show that the main result is somehow optimal.

Let $\QQ=\{q\in\baire: (\foralmostall n) (q(n)=0)\}$ be a set of rationals localized in $\baire$. On several occasions in this section we will use some specific countable dense subset of $\baire\times\baire$. Let us define it in the following way:
	\[
		Q=\{(p,q): p,q\in \QQ,\, \supp(p)=\supp(q)\tn{ and } p\neq q\},
	\]
where $\supp(q)=\max\{n\in\omega: q(n)\neq 0\}+1$. Since $\supp (q_1)=\supp (q_2)$ for every $q=(q_1, q_2)\in Q$, we may naturally extend the domain of $\supp$ to $\QQ\cup Q$ so that $\supp(q)=\supp(q_1)$.
\\
As a warm up let us consider a case of Laver trees.

\begin{proposition}\label{G delta Laver}
	There exists a dense $G_\delta$ set $G\se \omega^\omega$ such that $[L]\not\se G$ for every Laver tree $L.$
\end{proposition}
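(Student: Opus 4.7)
The plan is to find a meager $F_\sigma$ set $F\se\baire$ that meets $[L]$ for every Laver tree $L$, and take $G:=\baire\setminus F$. Then $G$ is a dense $G_\delta$ with $[L]\not\se G$ for all Laver trees $L$.

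The guiding observation is that in a Laver tree $L$ with stem $\sigma$, every $\tau\in L$ extending $\sigma$ belongs to $\omega$-$\splitt(L)$, so $\succe_L(\tau)$ is an infinite, hence unbounded, subset of $\omega$. Consequently, given any previously chosen value, we can always pick a successor exceeding it. By induction one then builds $x\in[L]$ with $x\rest|\sigma|=\sigma$ and $x(n+1)>x(n)$ for every $n\geq|\sigma|$, i.e., a branch of $[L]$ that is eventually strictly increasing.

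This motivates the choice
\[
	F=\{x\in\baire:(\exists N)(\forall n\geq N)\; x(n+1)>x(n)\}=\bigcup_{N\in\omega}F_N,
\]
where $F_N=\{x:(\forall n\geq N)\; x(n+1)>x(n)\}$. Each $F_N$ is closed, as an intersection of clopen conditions, and is nowhere dense: any basic clopen neighborhood $[\eta]$ in $\baire$ admits an extension $[\tau]$ with $|\tau|\geq N+2$ and $\tau(|\tau|-1)\leq\tau(|\tau|-2)$, which forces $[\tau]\cap F_N=\0$. Hence $F$ is a meager $F_\sigma$ set, so $G:=\baire\setminus F$ is the desired dense $G_\delta$, and by the previous paragraph $[L]\cap F\neq\0$ for every Laver tree $L$.

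The only delicate step is the first one: one must isolate an $F_\sigma$ property realizable along \emph{some} branch of an arbitrary Laver tree, regardless of how large or sparse the successor sets at its $\omega$-splitting nodes may be. Eventual strict monotonicity works precisely because at each splitting node we only need to locate a single successor larger than the previously chosen value, and any infinite subset of $\omega$ supplies such a successor.
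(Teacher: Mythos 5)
Your proof is correct and takes essentially the same approach as the paper: identify a dense $G_\delta$ set whose complement encodes a growth condition that, by virtue of the infinite splitting at every node above the stem, can always be realized along some branch of an arbitrary Laver tree. The paper takes $G=\{x\in\baire:(\existsinfty n)\,x(n)=0\}$ and builds a branch that is eventually nonzero, while you take the complement of the eventually-strictly-increasing sequences and build an eventually increasing branch; both hinge on exactly the same property of Laver trees.
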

\begin{proof}
	Let $G=\{x\in\baire: (\existsinfty n\in\omega) (x(n)=0)\}$. Clearly, $G$ is $G_\delta$ and dense. Let $L$ be a Laver tree. Let $x\in [L]$ such that $x(n)\neq 0$ for every $n\geq|\stem(L)|$. Then $x\in [L]\bez G$.
\end{proof}
Let us notice that every nonempty open set is a body of a Laver tree.
\\
The following theorem shows that the perfect set in Mycielski Theorem cannot be replaced with a body of Miller tree.
\begin{theorem}\label{Miller razy Miller nie}
	There exists an open dense set $U\se \baire\times \baire$ such that $[T]\times [T]\not\se U \cup \Delta$ for every Miller tree $T$.
\end{theorem}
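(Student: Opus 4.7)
The plan is to construct $U$ as a union of basic open rectangles placed around a countable dense set, and then exploit the $\omega$-splitting structure of a Miller tree to produce two distinct branches whose pair escapes $U$.

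Enumerate the countable dense set $Q = \{(p_k, q_k) : k \in \omega\}$ defined earlier. For each $k$, pick an integer $n_k > \supp(p_k) = \supp(q_k)$, with the specific choice to be calibrated; put
\[
V_k := [p_k \rest n_k] \times [q_k \rest n_k], \qquad U := \bigcup_{k\in\omega} V_k.
\]
Each $V_k$ is basic open and contains the point $(p_k, q_k) \in Q$, so $U \supseteq Q$ is dense, and $U$ is open as a union of open sets.

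To show $[T] \times [T] \not\subseteq U \cup \Delta$ for every Miller tree $T$, I would build $x \ne y$ in $[T]$ with $(x, y) \notin V_k$ for every $k$, by fusion. At stage $0$, I fix an $\omega$-splitting node $\rho \in T$ and pick two distinct successors $i, j \in \succe_T(\rho)$, setting $\sigma_0 := \rho \concat i$ and $\tau_0 := \rho \concat j$; this makes $\sigma_0 \perp \tau_0$ and forces the limit branches $x = \bigcup_k \sigma_k$ and $y = \bigcup_k \tau_k$ to be distinct. Recursively, at stage $k+1$, using that $T$ is Miller I locate an $\omega$-splitting node $\rho' \in T$ above $\sigma_k$ and choose a successor different from $p_{k+1}(|\rho'|)$ (at most one value is forbidden by this constraint, and $\succe_T(\rho')$ is infinite), then extend past $n_{k+1}$ inside $T$; this yields $\sigma_{k+1} \rest n_{k+1} \ne p_{k+1} \rest n_{k+1}$, so $[\sigma_{k+1}] \times [\tau_{k+1}] \cap V_{k+1} = \emptyset$. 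If the $\sigma$-side cannot be deviated early enough, I symmetrically break $V_{k+1}$ on the $\tau$-side using an $\omega$-splitting node above $\tau_k$.

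The main obstacle is the calibration of the heights $n_k$: they must be small enough for $U$ to be meaningfully ``thin'' near the diagonal, yet large enough that the fusion can always exceed $n_{k+1}$ within an arbitrary Miller tree to force a first-coordinate disagreement. The crucial observation making the proof go through is that breaking each $V_k$ requires deviating from $(p_k, q_k)$ on only one of the two sides, so the infinite-splitting freedom on either subtree of $T$ above the current stem suffices: a single sequence $p_k$ cannot match every value in an infinite $\omega$-splitting successor set, leaving at least one deviating branch. This flexibility, propagated through the recursion, produces the desired off-diagonal pair $(x,y) \in [T]^2 \setminus (U \cup \Delta)$.
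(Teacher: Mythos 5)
Your proposal has a genuine gap: the calibration of the heights $n_k$ is left unresolved, and no calibration fixed in advance (as it must be, since $U$ is defined before the Miller tree $T$ is given) makes the described fusion go through. The core obstruction is that a Miller tree's $\omega$-splitting nodes can sit arbitrarily high. If the next $\omega$-splitting node of $T$ above $\sigma_k$ has length $\geq n_{k+1}$, then choosing a different successor there changes nothing on $[0, n_{k+1})$, and $\sigma_{k+1}\rest n_{k+1}$ is simply whatever the (possibly non-splitting) stretch of $T$ dictates --- which may already coincide with $p_{k+1}\rest n_{k+1}$. The symmetric move on the $\tau$-side fails for the same reason. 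In the worst case $|\sigma_k|, |\tau_k|$ already exceed $n_{k+1}$ with $\sigma_k\rest n_{k+1} = p_{k+1}\rest n_{k+1}$ and $\tau_k\rest n_{k+1} = q_{k+1}\rest n_{k+1}$, and then \emph{every} extension lands in $V_{k+1}$. Also note that the move ``pick a successor different from $p_{k+1}(|\rho'|)$'' only needs two successors, not infinitely many, which is a sign that the $\omega$-splitting hypothesis is not really being exploited.

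The paper's construction is organized differently and this is precisely where it escapes the trap. The depth of the rectangle around $q^k$ is tied to the largest \emph{value} appearing in $q^k$: the cylinder is $[q^k\rest(\supp(q^k)+K(q^k))]$ where $K(q)=\max\{q_1(n),q_2(n):n\in\omega\}$. Then, rather than dodging one $V_k$ per stage, one first builds $x,y\in[T]$ with alternating ``large jumps'' --- at each $\omega$-splitting node one chooses the successor value to exceed the current length of the other branch, which is exactly what $\omega$-splitting buys --- and argues \emph{a posteriori}: for any $q$ with $q_1\rest\supp(q)\se x$ and $q_2\rest\supp(q)\se y$, the jump values recorded inside $q$ force $K(q)$ to be so large that the zero tail of $q_1$ or $q_2$ (of length $K(q)$ after $\supp(q)$) must overlap the next nonzero jump of $x$ or $y$, a contradiction. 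In short, the missing ideas are the $K(q)$-dependent depth of the cylinders and the switch from per-stage evasion to a global large-jump structure whose escape from $U$ is verified after the fact.
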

\begin{proof}
	Let $\{q^n: n\in \omega\}$ be an enumeration of $Q$ and let us set
	\[
		U=\bigcup_{k\in\omega}[q^k\rest (\supp (q^{k})+K(q^k))],
	\]
	where $K(q)=\max\{q_1(n), q_2(n): n\in\omega\}$ for $q=(q_1, q_2)\in Q$.
	\\
	Let $T$ be a Miller tree. Without loss of generality we may assume that for every $\sigma\in T$ either $|\succe_T(\sigma)|=1$ or $|\succe_T(\sigma)|=\omega$. We will pick points
	\begin{align*}
		x&={\sigma_0}^{\frown}{\sigma_1}^{\frown}...
		\\
		y&={\tau_0}^{\frown}{\tau_1}^{\frown}...
	\end{align*}
	from $[T]$ via induction. Let $\sigma_0=\tau_0=\stem(T)$. Let us assume the following notation
	\begin{align*}
		x_n&={\sigma_0}^{\frown}{\sigma_1}^{\frown}...\concat \sigma_n,
		\\
		y_n&={\tau_0}^{\frown}{\tau_1}^{\frown}...\concat\tau_n.
	\end{align*}
	Let us execute the step $n+1$. We set
	\[
		s_{n+1}=\min\{k\in\succe_T(x_n): k>|y_n|\}
	\]
	and $\sigma_{n+1}\es s_{n+1}$ such that ${x_n}\concat \sigma_{n+1}\in\splitt(T)$ and $|{x_n}\concat \sigma_{n+1}|>|y_n|$. In a similar fashion we proceed with $y_{n+1}$. We set
	\[
		t_{n+1}=\min\{k\in\succe_{T}(y_n): k>|x_{n+1}|\}
	\]
  and $\tau_{n+1}\es t_{n+1}$ such that ${y_n}\concat \tau_{n+1}\in\splitt(T)$ and $|{y_n}\concat \tau_{n+1}|>|x_{n+1}|$.
	\\
	We will show that
	\[
		(x,y)\in([T]\times[T])\bez (U\cup\Delta).
	\]
	It is clear that $(x,y)\in ([T]\times[T])\bez\Delta$. Let us suppose that $(x,y)\in U$. Then there is $q\in Q$ such that $(x,y)\in [q\rest(\supp(q)+K(q))]$. It follows that
	\begin{align*}
		q_1\rest(\supp(q)+K(q))&\se x,
		\\
		q_2\rest(\supp(q)+K(q))&\se y.
	\end{align*}
	Let us observe that since $q_1\neq q_2$, $|\stem(T)|<\supp(q)$. Let
	\[
		n=\max\{k\in\omega: {y_n}\concat t_{n+1}\se q_2\rest \supp(q)\}.
	\]
	In particular it means that $K(q)\geq t_{n+1}$ and $q_2\rest \supp(q)\sen {y_{n+1}}\concat t_{n+2}$. Let us also observe that
	\[
		|y_n|<|x_{n+1}|<|y_{n+1}|.
	\]
	It is the case that exactly one of the following holds:
	\begin{enumerate}
		\item $q_1\rest \supp(q)\sen {x_{n+1}}\concat s_{n+2}$;
		\item ${x_{n+1}}\concat s_{n+2}\se q_1\rest\supp(q)$.
	\end{enumerate}		
	If $(1)$ is true, then
	\[
		x\es(x_{n+1}\rest \supp(q))\concat \underbrace{ 0\concat ... \concat 0}_{K(q)},
	\]
	which gives a contradiction, since $K(q)\geq t_{n+1}>|x_{n+1}|$ and $s_{n+2}\neq 0$.
	\\
	If $(2)$ holds, then $K(q)\geq s_{n+2}$ and
	\[
		y\es(y_{n+1}\rest \supp(q))\concat \underbrace{ 0\concat ... \concat 0}_{K(q)},
	\]
	which is a contradiction because $K(q)\geq s_{n+2}>|y_{n+1}|$ and $t_{n+2}\neq 0$.
	\\
	Therefore $(x,y)\notin U$.
\end{proof}
Next result is concerned with replacing a perfect tree with a Silver tree. First, let us define some useful property of perfect trees. We will say that a perfect tree $T$ splits and rests, if
	\[
		(\forall \sigma,\tau\in T)(|\sigma|+1=|\tau|\land \sigma\se\tau \land \sigma \in \splitt(T)\IMP \tau\notin\splitt(T)).
	\]
\begin{lemma}\label{Silver splits and rests}
	For every Silver tree $T$ there exists a Silver tree $T'\se T$ that splits and rests.
\end{lemma}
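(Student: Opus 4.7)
The plan is to exploit the rigid ``coordinate-wise'' structure of a Silver tree. First I would observe that if $T\se A^{<\omega}$ is Silver, then the defining condition means that for each level $n\in\omega$ the set $S_n=\succe_T(\sigma)$ is the same nonempty subset of $A$ for every $\sigma\in T$ of length $n$. Consequently $T=\{\sigma\in A^{<\omega}:(\forall i<|\sigma|)\,\sigma(i)\in S_i\}$, the splitting nodes are exactly those whose length lies in $L(T)=\{n\in\omega:|S_n|\geq 2\}$, and $L(T)$ is infinite because $T$ is perfect.

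Next I would enumerate $L(T)$ as $\{n_0<n_1<n_2<\ldots\}$ and set $L'=\{n_{2k}:k\in\omega\}$, so that any two distinct elements of $L'$ differ by at least $2$. Define $S'_n=S_n$ for $n\in L'$, and for each $n\notin L'$ pick an arbitrary $a_n\in S_n$ and put $S'_n=\{a_n\}$. Let $T'=\{\sigma\in A^{<\omega}:(\forall i<|\sigma|)\,\sigma(i)\in S'_i\}$. Since $S'_n\se S_n$ for every $n$, we have $T'\se T$; by construction $T'$ is Silver with splitting level set exactly $L'$, which is infinite, so $T'$ is a perfect tree.

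To conclude that $T'$ splits and rests, take any $\sigma\in\splitt(T')$ and any $\tau\in T'$ with $\sigma\se\tau$ and $|\tau|=|\sigma|+1$. Then $|\sigma|\in L'$, so $|\tau|=|\sigma|+1\notin L'$, whence $\tau\notin\splitt(T')$. There is no serious obstacle here: once one notices the coordinate-wise description of Silver trees, the problem reduces to the elementary combinatorics of thinning an infinite subset of $\omega$ to avoid consecutive integers.
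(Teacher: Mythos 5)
Your proof is correct. It is the same basic idea as the paper's — thin the set of splitting levels so that no two of them are adjacent — but your execution via the coordinate-wise description $T=\{\sigma:(\forall i<|\sigma|)\,\sigma(i)\in S_i\}$ is cleaner and, as it happens, more robust. You keep the splitting levels $L'=\{n_{2k}:k\in\omega\}$ and collapse $S_n$ to a singleton on all other levels; since $n_{2k+2}>n_{2k+1}>n_{2k}$, the set $L'$ is infinite and contains no pair of consecutive integers, which gives both perfectness of $T'$ and the splits-and-rests property in one line.

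The paper instead greedily chooses splitting levels $n_0<n_1<\cdots$ with $n_k>n_{k-1}+1$ and defines $B=\{x\in[T]:(\forall k)\,x(n_k)=s_k\}$, which fixes the branch at each $n_k$ and hence kills the splitting \emph{at} those levels; the splitting levels of the resulting $T'$ are the splitting levels of $T$ that are \emph{not} among the $n_k$. As written this can fail: if $T$ is a Silver tree whose splitting levels are exactly the even integers, then $n_k=2k$ enumerates all of them, $B$ is a single branch, and $T'$ is not perfect. The construction the authors surely intended is the one you carry out — retain the chosen spaced-out levels and fix values elsewhere — so your version actually repairs this gap rather than merely reproving the lemma. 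No changes needed.
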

\begin{proof}
	Let $n_0=\min\{|\sigma|: \sigma\in \splitt(T)\}$ and $s_0=\min\{n\in\omega: {\sigma_0}\concat n\in T\}$, where $\sigma_0\in T$ and $|\sigma_0|=n_0$. For $k>0$ let
	\begin{align*}
		n_k&=\min\{|\sigma|>n_{k-1}+1: \sigma\in\splitt(T)\},
		\\
		s_k&=\min\{n\in\omega: {\sigma_k}\concat n\in T\},
	\end{align*}
	where $\sigma_k\in T$ satisfies $|\sigma_k|=n_k$. Now, let
	\[
		B=\{x\in [T]: (\forall k\in\omega)(x(n_k)=s_k)\}
	\]
	and set
	\[
		T'=\{x\rest n: n\in\omega,\, x\in B\}.
	\]
	Then $T'$ is the desired tree.
\end{proof}
\begin{proposition}\label{Silver razy Silver nie}
	There exists an open dense set $U\se\baire\times\baire$ such that $[T]\times[T]\not\se U\cup\Delta$ for any Silver tree $T$.
\end{proposition}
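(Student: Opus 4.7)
My plan is to adapt the proof of Theorem~\ref{Miller razy Miller nie} to Silver trees, starting from the same open dense set
\[
U=\bigcup_{k\in\omega}[q^k\restriction(\supp(q^k)+K(q^k))]
\]
or a mild variation of it. Given a Silver tree $T$, I apply Lemma~\ref{Silver splits and rests} to replace $T$ by a Silver subtree that splits and rests, keeping the same notation; let $n_0<n_1<\cdots$ be its splitting levels, $S_k\subseteq\omega$ (with $|S_k|\ge 2$) the common Silver successor set at level $n_k$, and $v_n$ the forced value at each non-splitting level $n$.

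The key idea is to produce a distinguished branch $c\in[T]$ by setting $c(n_k)\in S_k\setminus\{0\}$ at every splitting level and $c(n)=v_n$ elsewhere, so that $c(i)=0$ precisely at those non-splitting positions where $v_i=0$. If $(c,y)\in[q\restriction(\supp(q)+K(q))]$ for some $q\in Q$, then $c(i)=q_1(i)=0$ on the entire interval $[\supp(q),\supp(q)+K(q))$, forcing this interval to be a ``zero-stretch'' of $T$, that is, a block of non-splitting positions with $v_n=0$. The favourable case $v_n\neq 0$ at every non-splitting $n$ is then immediate: $c$ is nowhere zero and $K(q)\ge 1$, so no threat survives, and $(c,y)\notin U$ for every $y\in[T]\setminus\{c\}$. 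The symmetric case in which $v_n=0$ everywhere and $0\in S_k$ for every $k$ is handled in the same way using $0^\omega\in[T]$ in place of $c$.

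The main obstacle is the mixed case, in which $c$ has some zero positions while $0^\omega\notin[T]$; one can construct Silver trees for which the naive $U$ actually covers all of $[T]^2\setminus\Delta$, so a stronger open dense set is required. My plan is to enlarge the zero-padding in the definition, replacing $K(q)$ by a faster-growing $K'(q)$ (for instance $K'(q)=\supp(q)\cdot K(q)$), so that threats demand much longer zero-stretches in $T$; the resulting $U'$ remains open and dense in $\baire\times\baire$. With the stronger $U'$, the set of threats $q\in Q$ to a given $(c,y)$ becomes countable and controllable, and a diagonal/bookkeeping argument---exploiting $|S_k|\ge 2$ at every splitting level---produces $y\in[T]$ that differs from $c$ at splitting levels beyond those of any surviving threat, yielding $(c,y)\in[T]^2\setminus(U'\cup\Delta)$. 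The delicate point is choosing $K'$ uniformly so that the argument succeeds for every Silver tree, which is the main technical content of the proof.
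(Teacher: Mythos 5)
Your proposal contains a genuine gap that you yourself flag: the ``diagonal/bookkeeping argument'' and the ``delicate point'' of choosing $K'$ uniformly are left entirely open, and these are precisely the steps on which the whole argument stands or falls. In particular, it is unclear that any choice of $K'$ can work uniformly, because a Silver tree may have arbitrarily long intervals of non-splitting levels all forced to the value $0$, so the ``zero-stretch'' obstruction you are trying to exploit is simply not guaranteed to bite. Your ``mixed case'' is therefore not merely delicate but unresolved, and the proposal as written does not establish the proposition.

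The paper's proof sidesteps this difficulty by choosing a different open dense set $U$ that is tailored to Silver trees rather than inherited from the Miller argument. Instead of forcing a long block of zeros into the first coordinate (which a Silver tree can happily accommodate), the paper's $U$ is built from product cylinders of the form $\big[(q_1\rest\supp(q))\concat(0,0)\big]\times\big[(q_2\rest\supp(q))\concat(1,1)\big]$, so that any pair $(x,y)\in U$ must satisfy $x(n)\ne y(n)$ at \emph{two consecutive} coordinates $n$. In a Silver tree, disagreement at coordinate $n$ forces the entire level $n$ to consist of splitting nodes, so two consecutive disagreements force two consecutive splitting levels. This is exactly what Lemma~\ref{Silver splits and rests} rules out for a suitably pruned subtree, giving an immediate contradiction with no diagonalisation, no choice of distinguished branch $c$, and no need to control the set of threats. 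You invoke the lemma but use it only to normalise the tree; the paper uses it as the engine of the contradiction. The lesson is that the Miller-case $U$ exploits escape-to-large-values (which Miller trees must allow), whereas the Silver case calls for a $U$ exploiting uniformity of splitting across a level (which Silver trees must obey), and these are structurally different obstructions.
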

\begin{proof}
	Let $Q=\{q^n: n\in\omega\}$ and set
	\[
		U=\bigcup_{n\in\omega}\big[\big(q^n_1\rest\big(\supp(q^n)\big)\big)\concat (0,0)\big]\times \big[\big(q^n_2\rest\big(\supp(q^n)\big)\big)\concat (1,1)\big].
	\]
	Let $T$ be a Silver tree. Without loss of generality we may assume that $T$ splits and rests (Lemma \ref{Silver splits and rests}). Let $(x,y)\in [T]\times[T]$, $x\neq y$, and suppose that $(x,y)\in U$. Then there is $q=(q_1, q_2)\in Q$ such that
	\begin{align*}
		{\big(q_1\rest\supp(q)\big)}\concat(0,0)\se x,
		\\
		{\big(q_2\rest\supp(q)\big)}\concat(1,1)\se y.
	\end{align*}
	Clearly
	\begin{align*}
		x(\supp(q)+1)&=0\neq 1=y(\supp(q)+1),
		\\
		x(\supp(q)+2)&=0\neq 1=y(\supp(q)+2),
	\end{align*}hence all of the nodes in $T$ of lengths $\supp$ and $\supp+1$ are splitting, which constitutes a contradiction with the splitting and resting property of $T$.
\end{proof}
The following lemmas are preparation to the main theorem of this section.
\begin{lemma}\label{lemat dla par otwartych}
	For every open dense set $U\se \baire\times\baire$ and two open sets $V_1, V_2\se \baire$ there are sequences $\sigma_1, \sigma_2\in \omega^{<\omega}$ satisfying $[\sigma_1]\se V_1$, $[\sigma_2]\se V_2$, $|\sigma_1|=|\sigma_2|$ such that $[\sigma_1]\times [\sigma_2]\se U$ and $[\sigma_2]\times [\sigma_1]\se U$.
\end{lemma}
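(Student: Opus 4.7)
The plan is to apply the density of $U$ twice in succession to progressively shrink basic open rectangles, first arranging $[\sigma_1]\times[\sigma_2]\se U$ and then $[\sigma_2]\times[\sigma_1]\se U$, and finally equalizing the lengths by arbitrary extension.

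First, since $V_1$ and $V_2$ are open and nonempty (if either is empty the claim is vacuous), pick basic open neighborhoods $[\tau_1]\se V_1$ and $[\tau_2]\se V_2$ with $\tau_1,\tau_2\in\omega^{<\omega}$. Then $[\tau_1]\times[\tau_2]$ is a nonempty open subset of $\baire\times\baire$, so by density of $U$ the intersection $U\cap([\tau_1]\times[\tau_2])$ is a nonempty open set. Using the fact that basic open sets in the product topology on $\baire\times\baire$ are of the form $[\rho_1]\times[\rho_2]$, we can pick extensions $\tau_1'\es\tau_1$ and $\tau_2'\es\tau_2$ such that
\[
    [\tau_1']\times[\tau_2']\se U\cap([\tau_1]\times[\tau_2]).
\]

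Next, apply the same reasoning to the nonempty open set $[\tau_2']\times[\tau_1']$: density of $U$ yields extensions $\tau_1''\es\tau_1'$ and $\tau_2''\es\tau_2'$ with
\[
    [\tau_2'']\times[\tau_1'']\se U\cap([\tau_2']\times[\tau_1']).
\]
Crucially, since $\tau_1''\es\tau_1'$ and $\tau_2''\es\tau_2'$, we still have $[\tau_1'']\times[\tau_2'']\se[\tau_1']\times[\tau_2']\se U$, so \emph{both} rectangles are now inside $U$. Moreover, $[\tau_1'']\se V_1$ and $[\tau_2'']\se V_2$ are automatic.

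Finally, to secure $|\sigma_1|=|\sigma_2|$, set $n=\max\{|\tau_1''|,|\tau_2''|\}$ and extend each of $\tau_1'',\tau_2''$ arbitrarily (e.g.\ by appending zeros) to sequences $\sigma_1,\sigma_2$ of length $n$. Since $[\sigma_i]\se[\tau_i'']$, the four containments $[\sigma_1]\se V_1$, $[\sigma_2]\se V_2$, $[\sigma_1]\times[\sigma_2]\se U$, and $[\sigma_2]\times[\sigma_1]\se U$ all transfer, finishing the proof. There is no real obstacle here: the argument is a straightforward double application of openness and density, and the only thing to watch is making sure the second density step uses extensions of the nodes produced in the first step so that the earlier containment in $U$ is preserved.
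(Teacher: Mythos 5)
Your argument is correct and essentially the same as the paper's: both proofs use a first density step to get a rectangle inside $U\cap(V_1\times V_2)$, a second density step applied to the transposed rectangle to get extensions whose reversal also lands in $U$ (with the first containment preserved by monotonicity), and finally an arbitrary extension to equalize lengths. The only cosmetic difference is that you split the first step into choosing $[\tau_1]\times[\tau_2]\subseteq V_1\times V_2$ and then shrinking into $U$, whereas the paper does both at once.
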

\begin{proof}
	Let $U$, $V_1$ and $V_2$ be as in the formulation. $(V_1\times V_2)\cap U$ is open and nonempty, therefore there are sequences $\tau_1, \tau_2$ with $[\tau_1]\times [\tau_2]\se (V_1\times V_2)\cap U$. Repeating the argument, we find sequences $\tau_1'\es \tau_1$ and $\tau_2'\es \tau_2$ satisfying $[\tau_2']\times [\tau_1']\se U$. We may assume that $|\tau_1'|=|\tau_2'|$ (otherwise we extend the shorter one however we like). We set $\sigma_1=\tau_1'$ and $\sigma_2=\tau_2'$.
\end{proof}
For $\sigma, \tau\in\omega^{<\omega}$ and $U\se \baire\times\baire$ let us denote the fact that $[\sigma]\times [\tau]\se U$ and $[\tau]\times[\sigma]\se U$ by $\psi(\sigma, \tau, U)$. The following lemma is an extension of the previous one.
\begin{lemma}\label{lemat dla skonczonych ciagow}
	For every open dense set $U\se \baire\times\baire$, a finite sequence of open sets $(V_k: 0\leq k<n)$ in $\baire$ there is a sequence of sequences $(\sigma_k: 0\leq k<n)$ such that:
	\begin{enumerate}
		\item $[\sigma_k]\se V_k$ for all $0\leq k<n$,
		\item $|\sigma_k|=|\sigma_l|$ for all $0\leq k,l<n$,
		\item $\psi(\sigma^{k}_l, \sigma^{l}_k, U)$ for all distinct $0\leq k, l<n$.
	\end{enumerate}
\end{lemma}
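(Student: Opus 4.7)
The plan is to generalize Lemma \ref{lemat dla par otwartych} in one shot by passing to the product space $\baire^n$ and carrying out a Baire-category style open-dense-intersection argument there, rather than iterating a pairwise construction. The core observation is that for each ordered pair of distinct indices $k, l \in \{0, \ldots, n-1\}$, the set $U_{k,l} = \pi_{k,l}^{-1}(U) \se \baire^n$, where $\pi_{k,l}$ denotes the projection onto coordinates $k$ and $l$, is both open and dense in $\baire^n$: openness is immediate from continuity of $\pi_{k,l}$ and openness of $U$; density follows because any nonempty basic open box $[\tau_0] \times \cdots \times [\tau_{n-1}]$ projects onto the nonempty open rectangle $[\tau_k] \times [\tau_l]$, which meets $U$ by density of $U$, and any such witness lifts to a point of the original box belonging to $U_{k,l}$.

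Set $W = \bigcap \{U_{k,l} : 0 \leq k \neq l < n\}$. As a finite intersection of open dense subsets of $\baire^n$, $W$ is itself open and dense. Since $V_0 \times \cdots \times V_{n-1}$ is a nonempty open subset of $\baire^n$, the intersection $W \cap (V_0 \times \cdots \times V_{n-1})$ is open and nonempty, and therefore contains some basic open box $[\sigma_0] \times \cdots \times [\sigma_{n-1}]$. By extending the shorter $\sigma_k$'s arbitrarily, we may assume $|\sigma_0| = \cdots = |\sigma_{n-1}|$; this only shrinks each body $[\sigma_k]$, so it preserves both $[\sigma_k] \se V_k$ and containment of the product box in $W$.

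Condition (1) now holds since $[\sigma_k] \se V_k$, and condition (2) holds by construction. For condition (3), for any distinct $k, l < n$, containment of the product box in $U_{k,l}$ yields $[\sigma_k] \times [\sigma_l] \se U$, and symmetrically containment in $U_{l,k}$ yields $[\sigma_l] \times [\sigma_k] \se U$, so $\psi(\sigma_k, \sigma_l, U)$ holds. There is no substantial obstacle in this approach; an alternative would be a direct induction on $n$ applying Lemma \ref{lemat dla par otwartych} to the new pair $(\sigma_k, \sigma_{n-1})$ for each $k < n-1$ in turn, but doing so requires careful length bookkeeping that the product-space formulation sidesteps by imposing all pairwise conditions simultaneously.
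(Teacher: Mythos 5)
Your proof is correct, and it takes a genuinely different route from the paper's. The paper iterates Lemma \ref{lemat dla par otwartych} pair by pair, building a doubly-indexed nested family $\sigma^k_l$ with $[\sigma^k_0]\es[\sigma^k_1]\es\cdots$ inside $V_k$, so that $\psi(\sigma^k_l,\sigma^l_k,U)$ is arranged at the stage where the pair $\{k,l\}$ is treated and is automatically preserved by all later refinements; the final $\sigma_k$ is the innermost $\sigma^k_{n-1}$, padded with zeros to equalize lengths. (This also explains why condition (3) of the statement is phrased in terms of the superscripted $\sigma^k_l$: those objects exist only inside the paper's construction, and the conclusion actually invoked later is $\psi(\sigma_k,\sigma_l,U)$, which both arguments deliver.) You instead pass to $\baire^n$, note that each $\pi_{k,l}^{-1}(U)$ is open dense there, and intersect these finitely many dense open sets with $V_0\times\cdots\times V_{n-1}$ to find a single basic box handling all pairs at once. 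This eliminates the nested bookkeeping at essentially no cost, since the density of $\pi_{k,l}^{-1}(U)$ is just the two-dimensional density of $U$ restated in the product space; your verification of that density (project, use density of $U$, lift the witness) is complete. Both proofs rely on the same implicit assumption that the $V_k$ are nonempty and end with the same length-padding step, which is harmless because extending a node only shrinks its body.
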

\begin{proof}
	Let $U$ and $(V_k: k<n)$ be as in the formulation. Applying Lemma \ref{lemat dla par otwartych} multiple times we will construct inductively a sequence $(\sigma_l^k: k,l<n;\, k\neq l)$ of sequences satisfying:
	\begin{enumerate}
		\item $(\forall k,l<n)(l\neq k\IMP [\sigma^k_l]\se V_k)$;
		\item $(\forall k)([\sigma^k_0]\es [\sigma^k_1]\es ... \es [\sigma^k_{k-1}]\es [\sigma^k_{k+1}]\es ... \es [\sigma^k_{n-1}])$;
		\item $\psi(\sigma^{k}_l, \sigma^{l}_k, U)$ holds for all distinct $k, l<n$.
	\end{enumerate}
	At the step $0$ first we find $\sigma^0_1$ and $\sigma^1_0$ such that $[\sigma^0_1]\se V_0$, $[\sigma^1_0]\se V_1$ and $\psi(\sigma^0_1, \sigma^1_0, U)$.
	Then for every $k<n, k\neq 0,1,$ we choose $\sigma^0_k$ and $\sigma^k_0$ satisfying $[\sigma^0_k]\se [\sigma^0_{k-1}], [\sigma^k_0]\se V_k$, and $\psi(\sigma^0_k,\sigma^k_0,U)$.
	\\
	Let us execute the step $k$, $0<k<n$. We pick $\sigma^k_{k+1}$ and $\sigma^{k+1}_k$ satisfying $[\sigma^k_{k+1}]\se [\sigma^k_{k-1}]$, $[\sigma^{k+1}_k]\se [\sigma^{k+1}_{k-1}]$ and $\psi(\sigma^k_{k+1},\sigma^{k+1}_k, U)$. For $l>k+1$ we find $\sigma^k_{l}$ and $\sigma^l_{k}$ such that $[\sigma^k_{l}]\se [\sigma^k_{l-1}]$, $[\sigma^l_{k}]\se [\sigma^l_{k-1}]$ and $\psi(\sigma^k_{l}\sigma^l_{k}, U)$. The construction is completed.
	\\
	Let us set $\sigma_k'=\sigma^k_{n-1}$ for every $k<n$. If lengths of $\sigma_k', 0\leq k<n$ are the same, then we set $\sigma_k=\sigma_k'$ for each $0\leq k<n$. If not, let $N=\max\{|\sigma_k'|: 0\leq k<n\}$, and let us set
	\[
		\sigma_k={\sigma_k'}\concat\underbrace{ 0 \concat ...\concat 0}_{N-|\sigma_k'|}
	\]
	for each $0\leq k<n$. Then lengths of these sequences match and properties established during the construction are not compromised.
\end{proof}

\begin{theorem}\label{Mycieslki 1.5}
For every comeager set $G$ of $\w^\w \times \w^\w$ there exists a Miller tree $M\subseteq \w^{<\w}$ and a uniformly perfect tree $P\se M$ such that $[P]\times [M] \subseteq G\cup \Delta.$
\end{theorem}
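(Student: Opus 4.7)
Without loss of generality, $G$ is a dense $G_\delta$ set, $G = \bigcap_{n \in \omega} U_n$ with each $U_n$ open dense in $\baire \times \baire$ and $U_{n+1} \se U_n$. The plan is to construct, via a fusion, a family of finite sequences $(\sigma_t)_{t \in \omega^{<\omega}} \se \omega^{<\omega}$ together with an increasing sequence of levels $l_0 < l_1 < \cdots$ satisfying: (i) $\sigma_\emptyset$ is a fixed stem; (ii) $\sigma_s \subsetneq \sigma_t$ whenever $s \subsetneq t$, with $|\sigma_t| = l_{|t|}$; (iii) the values $\sigma_{t\concat i}(l_{|t|})$ are pairwise distinct for $i \in \omega$; and (iv) for every $n$ and every pair of distinct same-length indices $t, s$, there is a stage by which $[\sigma_t] \times [\sigma_s] \se U_n$.

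From such a family, set $M$ and $P$ to be the subtrees of $\omega^{<\omega}$ consisting of all prefixes of the $\sigma_t$'s for $t \in \omega^{<\omega}$ and $t \in 2^{<\omega}$, respectively. Condition (iii) makes each $\sigma_t$ an $\omega$-splitting node of $M$ at level $l_{|t|}$, and such nodes are cofinal, so $M$ is Miller. For $P$, at each level $l_k$ every node splits into $2$ (via $t\concat 0$ and $t\concat 1$) and there is no $P$-splitting at other levels, so $P$ is uniformly perfect; clearly $P \se M$. Every $x \in [P]$ has the form $x = \lim_k \sigma_{t|_k}$ for a unique $t \in 2^\omega$, and every $y \in [M]$ the form $y = \lim_k \sigma_{s|_k}$ for a unique $s \in \omega^\omega$. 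If $x \neq y$ then $t \neq s$, so $t|_k \neq s|_k$ for all large $k$; by (iv), for each $n$ there exists $k$ with $[\sigma_{t|_k}] \times [\sigma_{s|_k}] \se U_n$, whence $(x, y) \in \bigcap_n U_n = G$. This yields $[P] \times [M] \se G \cup \Delta$.

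The fusion proceeds in stages using Lemma \ref{lemat dla skonczonych ciagow}. At stage $n$, suppose I have committed $\sigma_t^{(n)}$ for $t$ in a finite subtree $T^n \se \omega^{<\omega}$, with $T^n \nearrow \omega^{<\omega}$. I apply the Lemma to the family of sequences at the current top level of $T^n$, with $U = \bigcap_{j \leq n} U_j$ (still open dense as a finite intersection of open dense sets) and $V_t$ equal either to the current clopen approximation $[\sigma_t^{(n)}]$ (for previously committed nodes) or to $[\sigma_{t^-}\concat a_t]$ with a fresh branching letter $a_t$ (for nodes introduced at this stage). This yields refinements of common length with pairwise cross-products in $\bigcap_{j \leq n} U_j$, extending the previous commitments coherently.

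The main obstacle is accommodating the $\omega$-many successors required at each node while maintaining tree compatibility: once two or more children of a node $\sigma_t$ have been committed, the branching coordinate at level $l_{|t|}$ is frozen and $\sigma_t$ itself can no longer be extended, so one cannot dump $\omega$-many successors at a single stage. The plan is to schedule the addition of each node's $\omega$ children over infinitely many stages, adding only finitely many per stage, and to invoke Lemma \ref{lemat dla skonczonych ciagow} on all current top-level nodes just before any of them receive children (which would freeze them). A diagonal enumeration of the tasks --- pairing each $(t, s)$ with each $U_n$ --- ensures that every same-length pair is considered at some stage with $U$ containing $U_n$ for arbitrarily large $n$, so every required cross-product is eventually realized, completing property (iv) and hence the proof.
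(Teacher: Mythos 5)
Your high-level plan (build a family $(\sigma_t)$ by fusion using Lemma \ref{lemat dla skonczonych ciagow}, let $M$ and $P$ consist of all prefixes, verify $[P]\times[M]\se G\cup\Delta$ by finding for each $n$ a depth at which the relevant pair of nodes has cross-product inside $U_n$) is indeed the paper's plan. However, your condition (ii) has a fatal extra strength that makes the construction impossible.

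You require $|\sigma_t|=l_{|t|}$ for \emph{every} $t\in\omega^{<\omega}$, not just the binary ones. Combined with (iii) this forces $M$ to be a \emph{uniformly perfect} Miller tree: every node of $M$ of length $l_k$ equals some $\sigma_t$ with $|t|=k$ and is therefore $\omega$-splitting, while no node of other length splits. But Proposition \ref{Miller nie uniformly perfect} in the paper produces a comeager $G_\delta$ set $G_0\se\baire$ with $[T]\not\se G_0$ for every uniformly perfect Miller tree $T$. Taking $G=\baire\times G_0$ (still comeager), your construction would give $[P]\times[M]\se G\cup\Delta$, hence $[M]\se G_0\cup\{x\}$ for each $x\in[P]$; since $[P]$ has at least two points, $[M]\se G_0$, a contradiction. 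So no scheduling can rescue your fusion as stated. The ``main obstacle'' you flag is a symptom of this: once $\sigma_t$ is frozen, a later same-length sibling $\sigma_{t'}$ cannot be arranged to satisfy $\psi(\sigma_t,\sigma_{t'},U_n)$ — Lemma \ref{lemat dla skonczonych ciagow} would have to refine $\sigma_t$, and merely shrinking $[\sigma_{t'}]$ cannot force the full rectangle $[\sigma_t]\times[\sigma_{t'}]$ inside an open dense $U_n$; this is not a bookkeeping nuisance but a hard wall.

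The paper avoids it in two linked ways: (a) lengths are synchronized only across \emph{binary} indices (its condition (4)), so $P$ is uniformly perfect but $M$ is not; and (b) the $\psi$-certification is organized by \emph{construction stage}, not by tree depth. Concretely, $B_n$ is indexed by $n^{\leq n}$, and all nodes of $B_n\setminus B_{n-1}$ — which sit at \emph{different} depths and lengths — are made pairwise $\psi$-related with respect to $U_n$ in one shot. In the verification one then matches the node $\tau_{\alpha\rest N}$ on $y$'s branch with a generally \emph{deeper} binary node $\tau_{\beta\rest N'}$ on $x$'s branch, where $N'\geq N$ is the stage at which $\tau_{\alpha\rest N}$ first appears; both belong to $B_{N'}\setminus B_{N'-1}$ and $(x,y)\in U_{N'}\se U_N$. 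This cross-depth pairing is precisely what your uniform-level scheme forbids. (Also, as written your condition (iv) is not a property of the final objects — for fixed $\sigma_t,\sigma_s$, ``$[\sigma_t]\times[\sigma_s]\se U_n$'' is simply true or false — and would need to be recast, e.g.\ as: for every $t\in 2^\omega$, $s\in\omega^\omega$ with $t\ne s$, and every $n$, there is $k$ with $[\sigma_{t\rest k}]\times[\sigma_{s\rest k}]\se U_n$.)
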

\begin{proof} Let us assume that $G=\bigcap_{n\in\w} U_n$ where $(U_n)_{n\in\w}$ is a descending sequence of open dense subsets of $\w^\w \times \w^\w$. We will construct recursively a sequence $(B_n: n\in\omega)$ of sets. $B_n=\{\tau_\sigma: \sigma\in n^{\leq n}\}$ should consist of nodes satisfying:
\begin{enumerate}
	\item $\tau_\0=\0$, $\tau_{\sigma_1}\se \tau_{\sigma_2}$ for $\sigma_1\se\sigma_2$ and ${\tau_\sigma}\concat k\se\tau_{\sigma\concat k}$;
	\item $\tau_{\sigma^\frown k} \cap \tau_{\sigma^\frown j}=\tau_\sigma$ for $\sigma\in n^{<n}$ and distinct $k, j< n$;
	\item for $n>0$ and all $\tau, \tau'\in B_n\bez B_{n-1}$ $\psi(\tau, \tau', U_n)$ holds;
	\item If $\sigma_1, \sigma_2\in \{0,1\}^{\leq n}$ then $|\tau_{\sigma_1}|=|\tau_{\sigma_2}|$.
\end{enumerate}
At the step $0$ we set $\tau_\0=\0$ and $B_0=\{\tau_\0\}$. Next, we set $\tau_0, \tau_1\es\tau_\0$ so that $\psi(\tau_0, \tau_1, U_2)$ (Lemma \ref{lemat dla par otwartych}), and $\tau_{00}, \tau_{01}\es\tau_0, \tau_{10}, \tau_{11}\es\tau_1$ with accordance to Lemma \ref{lemat dla skonczonych ciagow}. We set
    \[
      B_1=B_0\cup\{\tau_0, \tau_1\} \tn{ and }  B_2=B_1\cup\{\tau_{00}, \tau_{01}, \tau_{10}, \tau_{11}\}.
    \]
Now, let us assume that we already have a set $B_n$ with the above properties and let us execute the step $n+1$, $n>1$. First we set $\tau_{\sigma\concat n}$ for $\sigma\in n^{<n}$ and $\tau_{\sigma\concat k}$, $\sigma\in n^n$, $k<n+1$, in such a way that they have the same lengths, propagate the condition $(1)$ and $(2)$, and $\psi(\tau_{\sigma_1}, \tau_{\sigma_2}, U_{n+1})$ for all distinct
	\[
		\sigma_1, \sigma_2 \in\{\sigma\concat n: \sigma\in n^{<n}\}\cup\{\sigma\concat k: \sigma\in n^n, \, k<n+1\}.
	\]
	Next, we set $\tau_{\sigma\concat k}$ for $\sigma\in (n+1)^{<n+1}\bez n^{\leq n}$ and $k<n+1$ in a similar fashion.
	\\
	This completes the construction. Let us set $B=\bigcup_{n\in\omega}B_n$ and
\begin{align*}
	M&=\{\tau\in\omega^{<\omega}: (\exists \tau'\in B) (\tau\se\tau')\},
	\\
	P&=\{\tau\in\omega^{<\omega}: (\exists n\in\omega)(\exists \sigma\in 2^{n})(\tau\se \tau_{\sigma})\}.
\end{align*}
Clearly, $M$ is a Miller tree. Furthermore, $P\se M$ is a uniformly perfect tree thanks to the condition $(4)$. We will show that $[P]\times [M]\se G\cup\Delta$. Let $(x,y)\in [P]\times[T]$, $x\neq y$. We claim that there exists $\alpha\in\baire$ such that
	\[
		(\forall n\in\omega) (\tau_{\alpha\rest n}\se y).
	\]
We will define $\alpha=(a_0, a_1, ...)$ via induction. Let us observe that $y\rest 1\se \tau_{y(0)}$ and $\tau_{y(0)}$ is the shortest sequence from $B$ possessing such a property. Therefore, $y\rest |\tau_{y(0)}|=\tau_{y(0)}$, otherwise there would be $\tau\in B$ such that $\tau_{y(0)}\sen\tau$ and $y\rest |\tau_{y(0)}|\se \tau$, which is a contradiction. We set $a_0=y(0)$.
\\
Next, let us assume that we already have a strictly ascending sequence $(a_{k}: k<n)$ of natural numbers with a property $\tau_{a_0a_1...a_{n-1}}\se y$ for every $k<n$. As previously, we see that
	\[
		y\rest (|\tau_{a_0a_1...a_{n-1}}|+1)\se \tau_{a_0a_1...a_{n-1}y(|\tau_{a_0a_1...a_{n-1}}|)}
	\]
and that $\tau_{a_0a_1...a_{n-1}y(|\tau_{a_0a_1...a_{n-1}}|)}$ is the shortest sequence from $B$ with such a property. Hence $\tau_{a_0a_1...a_{n-1}y(|\tau_{a_0a_1...a_{n-1}}|)}\se y$, so we set $a_n=y(|\tau_{a_0a_1...a_{n-1}}|)$. This completes the definition of $\alpha$.
\\
Now, let us fix $N\in\omega$. There exists $N'\geq N$ such that $\tau_{\alpha\rest N}\in B_{N'}\bez B_{N'-1}$. Furthermore, there exists $\sigma\in 2^{N'}$ such that $\tau_\sigma\se x$. Then $[\tau_\sigma]\times [\tau_{\alpha\rest N}]\se U_{N'}\se U_N$, hence also $(x,y)\in U_N$. $N$ was chosen arbitrarily, thus $(x,y)\in G$.
\end{proof}
Let us make some remarks. The Miller tree $T$ in the above theorem has a nice property. For each $\tau\in T$ the set $\succe_T(\sigma)=\omega$ or $|\succe_T(\sigma)|=1$. Let us also observe that one cannot make this Miller tree uniformly perfect.
\begin{proposition}\label{Miller nie uniformly perfect}
	There exists a $G_\delta$ set $G$ such that $[T]\not\se G$ for every uniformly perfect Miller tree.
\end{proposition}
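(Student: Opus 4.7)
The strategy is to construct a comeager dense $G_\delta$ set $G \subseteq \w^\w$ such that, for every uniformly perfect Miller tree $T$, some branch of $[T]$ lies outside $G$. This is the one-dimensional analog of Proposition~\ref{G delta Laver} for uniformly perfect Miller trees, but the richer structure of such trees---the splitting levels are globally fixed, and between consecutive splits the extensions are deterministic---forces $G$ to encode more than a single-coordinate condition on $x$.

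A naive candidate modelled on the Laver case, namely $G = \{x : (\existsinfty n)(x(n) = 0)\}$, already fails: if $T$ has the value $0$ as a forced extension at infinitely many of the deterministic positions between consecutive splits, then every branch of $T$ lies in $G$. More generally, any ``infinitely often'' condition on a single coordinate of $x$ can be made to hold for every branch by choosing the tree's forced extensions appropriately, so the correct $G$ must exploit the interplay between the choices made at split levels (where, by the Miller property, an $\omega$-splitting node is available cofinally often) and the forced extensions between them.

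Following the style of the open sets appearing in the proofs of Theorem~\ref{Miller razy Miller nie} and Proposition~\ref{Silver razy Silver nie}, I would take $G = \bigcap_{n \in \w} U_n$ where each $U_n$ is an open dense set built as $\bigcup_{q \in \QQ} [q \rest (\supp(q) + K(q) + f(n,q))]$ for a suitable function $f : \w \times \QQ \to \w$. Given a uniformly perfect Miller tree $T$ with splitting levels $n_0 < n_1 < \cdots$, one would then construct $x \in [T] \setminus G$ by induction on split levels: at each $n_k$, use the Miller property to pick $x(n_k) \in \succe_T(x \rest n_k)$ from an $\omega$-splitting node above, chosen so that the prefix $x \rest (n_k + L)$ for a suitable $L$ avoids every basic open from some $U_n$, and let the tree-forced extensions determine $x$ in between. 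The step I expect to be hardest is calibrating $f$ against the countably many forced-extension functions of $T$ so that the forced values cannot conspire to put $x$ back into some $U_n$; this is the analogue of the choice of $K(q)$ in the proof of Theorem~\ref{Miller razy Miller nie} and likely requires a pigeonhole argument on the forced patterns emanating from each split. The proposition should be sharp in both directions: without uniform perfectness, split levels could vary between branches and the diagonalization would break down; without the Miller property, one would not have enough room at the split levels to escape all basic opens in $U_n$.
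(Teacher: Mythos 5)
Your setup has the right shape: you correctly observe that the Laver-style condition fails for uniformly perfect Miller trees, you have the right template for $G$ (the paper takes $f(n,q)=n$, i.e.\ $G_n=\bigcup_{q\in\QQ}[q\rest(\supp(q)+K(q)+n)]$), and you correctly identify that the $\omega$-splitting at the (uniform) split levels $n_0<n_1<\cdots$ is where the freedom lies. But you have not found the diagonal, and the part you flag as hard (``calibrating $f$ against the forced-extension functions'') is actually a red herring that signals the missing idea.

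The paper's construction of the escaping branch $x\in[T]$ is much simpler than you anticipate: just pick $x(n_k)>n_{k+1}$ for every $k$, letting the tree force the in-between values arbitrarily. There is no level-by-level avoidance of basic opens, no dependence of $f$ on the tree, and no pigeonhole on forced patterns. The point is that $K(q)$ is not a free parameter you calibrate against --- it is dragged upward by $x$ itself. Concretely, suppose $x\in G_N$ with $N>n_0$, witnessed by $q$ agreeing with $x$ up to position $\supp(q)+K(q)+N$ (and $q$ is identically $0$ beyond $\supp(q)$). If $\supp(q)<n_0$, then $x(n_0)=q(n_0)=0$, contradicting $x(n_0)>n_1\geq 1$. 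Otherwise let $m$ be least with $\supp(q)<n_m$; then $n_{m-1}\leq\supp(q)$, so $K(q)\geq q(n_{m-1})=x(n_{m-1})>n_m$, hence the agreement window reaches past $n_m$ and $x(n_m)=q(n_m)=0$, again contradicting $x(n_m)>n_{m+1}\geq 1$. The forced values between splits never enter; what matters is that any $q$ matching $x$ at some split level inherits a huge $K(q)$ from $x$'s own growth, and this huge $K(q)$ extends the forced block of zeros past the next split level, where $x$ was chosen to be nonzero. Your proposal is therefore not a complete proof: it lacks this specific growth condition, which is the entire content of the argument.
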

\begin{proof}
	For every $n\in\omega$ let $G_n=\bigcup_{q\in\QQ}[q\rest(\supp(q)+K(q)+n))]$. Let $T$ be a uniformly perfect Miller tree. Without loss of generality we may assume that for every $\sigma\in T$ we have $|\succe_T(\sigma)|\in\{1, \omega\}$. Let $\{n_k: k\in\omega\}$ be an enumeration of
	\[
            \{n\in\omega: \omega^n\cap T\se\splitt(T)\}
	\]
in an ascending order. We find $x\in[T]$ such that $x(n_{k})>n_{k+1}$ for each $k\geq 0$. Let $N>n_0$ and let us suppose that $x\in G_N$. Then there exists $q\in\QQ$ such that $q\rest(\supp(q+K(q)+N))\se x$. If $\supp(q)<n_0$, then $x(n_0)=0$, a contradiction. Let us assume that $\supp(q)\geq n_0$ then, and let
	\[
		m=\min\{k\in\omega: \supp(q)<n_k\}.
	\]
	Let us notice that $m>0$. $n_{m-1}\leq \supp(q)$, hence $K(q)\geq x(n_{m-1})>n_m$, which implies that $x(n_m)=0$. A contradiction, thus the proof is complete.
\end{proof}
Let us observe that each nonempty open set contains a body of uniformly perfect Miller tree, e.g. a basic clopen set.

\section{Measure Case}

This section is devoted to possible enhancements of two-dimensional Mycieski theorem for the measure case. 
Proposition \ref{full nie zawierajacy Millera} mirrors Proposition \ref{G delta Laver}. It shows that we may exclude Miller trees from further considerations. Hence, in consecutive results we work in the Cantor space. The main theorem of this section (Theorem \ref{fulluniformly}) shows that we can inscribe the square of a body of uniformly perfect tree into a set of measure one (modulo diagonal). Proposition \ref{fullSilver} shows that it is not true in the case of Silver trees and Proposition \ref{full contains Silver} shows that no one-dimensional counterexample is feasible.

\begin{proposition}\label{full nie zawierajacy Millera}
	Let $\mu$ be a strictly positive probabilistic measure on $\omega^\omega$. Then there exists an $F_{\sigma}$ set $F$ of measure $1$ such that $[T]\not\se F$ for every Miller tree $T$.
\end{proposition}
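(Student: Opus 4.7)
The plan is to construct $F$ as the complement of a $G_\delta$ null set $N$ engineered so that every Miller tree has a branch in $N$. The guiding idea is that at each $\omega$-splitting node a Miller tree provides arbitrarily large successors, while at a fixed coordinate $n$ the set of sequences whose $n$-th coordinate is large has small measure; so if $N$ is the set of sequences exceeding a prescribed threshold $f(n)$ at coordinate $n$ infinitely often, it will be both $\mu$-null and unavoidable for Miller bodies.

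First, using that $\mu$ is a probability measure and that $\{\{x:x(n)=k\}:k\in\omega\}$ partitions $\baire$, I would choose for each $n$ an integer $f(n)$ with
	\[
		\mu(A_n)<2^{-n-1}, \tn{ where } A_n=\{x\in\baire:x(n)>f(n)\}.
	\]
Each $A_n$ is open (a union of basic clopen cylinders), so $N_m:=\bigcup_{n\geq m}A_n$ is open with $\mu(N_m)<2^{-m}$, and
	\[
		N:=\bigcap_{m\in\omega}N_m=\{x\in\baire:(\existsinfty n)(x(n)>f(n))\}
	\]
is a $G_\delta$ null set. Setting $F:=\baire\bez N$ gives the desired $F_\sigma$ set of full $\mu$-measure.

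To see that $[T]\not\se F$ for every Miller tree $T$, I would build $x\in[T]\cap N$ recursively. Start with an $\omega$-splitting node $\sigma_0\in T$ extending $\stem(T)$; given $\sigma_n\in\omega$-$\splitt(T)$, pick $i_n\in\succe_T(\sigma_n)$ with $i_n>f(|\sigma_n|)$ (possible because $\succe_T(\sigma_n)$ is infinite) and, using the Miller property, extend $\sigma_n\concat i_n$ to an $\omega$-splitting node $\sigma_{n+1}\in T$. The resulting branch $x=\bigcup_n\sigma_n\in[T]$ satisfies $x(|\sigma_n|)=i_n>f(|\sigma_n|)$ for every $n$, and $|\sigma_n|\to\infty$, so $x\in N$. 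There is no genuine obstacle here: the Miller property supplies exactly the freedom — infinitely many successors at cofinally many levels — needed to diagonalize against any prescribed bound $f$. I note in passing that strict positivity of $\mu$ is not actually used in the argument; the probability-measure hypothesis alone suffices to produce the thresholds $f(n)$.
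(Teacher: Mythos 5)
Your proof is correct and rests on the same underlying idea as the paper's: cover $\baire$, up to a null set, by sets imposing coordinatewise bounds, so that a Miller tree can always escape by choosing a large successor at an $\omega$-splitting node. The paper realizes this by building, for each $n$, a finitely-branching tree $T_n$ with level bounds chosen so that $\mu([T_n])\to 1$ as $n\to\infty$, and setting $F=\bigcup_n[T_n]$; each piece is compact. You instead fix a single bound $f\colon\omega\to\omega$ and pass to the complement of the null $G_\delta$ set $N=\{x:(\exists^\infty n)\ x(n)>f(n)\}$, so your closed pieces $\{x:(\forall n\geq m)\ x(n)\leq f(n)\}$ are not compact, which is harmless here. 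A more notable difference is that you make explicit the diagonalization producing a branch of an arbitrary Miller tree inside $N$; the paper's proof only estimates $\mu(F)$ and leaves the Miller-escape argument to the reader (it follows, for instance, because a $\sigma$-compact set cannot contain the body of a Miller tree). Your remark that strict positivity of $\mu$ is unnecessary is also correct: the probability hypothesis alone forces $\mu(\{x:x(n)>k\})\to 0$ as $k\to\infty$, which is all either construction uses.
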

\begin{proof}
	Let $\eps_n=\frac{1}{2^n}$ for every $n>0$. We will construct inductively a sequence $(F_n: n\in\omega)$ of closed subsets of $\baire$. For every $n\in\omega$ let us set
	\begin{align*}
		m^n_1&=\min\{n\in\omega: \sum_{i=0}^{n}\mu([i])>1-\eps_{n+1}\},
		\\
		T^n_1&=\{\0, (i): i\leq m_1^n\},
	\end{align*}
and for $k>1$ let
	\begin{align*}
		m^n_k&=\min\{j\in\omega: (\forall \sigma\in \omega^{k-1}\cap T^n_{k-1}) (\sum_{i=0}^{j}\mu([\sigma\concat i])>(1-\eps_{k+n})\mu([\sigma]))\},
		\\
		T^n_k&=T^n_{k-1}\cup\{\sigma\concat i: i\leq m^n_k, \, \sigma\in T^n_{k-1}\}.
	\end{align*}
	Then we set $T_n=\bigcup_{i\in\omega}T^n_i$ and $F_n=[T_n]$. Finally, let $F=\bigcup_{n\in\omega}F_n$. To see that $F$ is the desired set, let us approximate its measure. For each $n\in\omega$ we have
	\[
		\mu(F)\geq\mu(F_n)>\prod_{i=1}^{\infty}(1-\eps_{n+i})=\prod_{i=1}^{\infty}(1-\frac{1}{2^{n+i}})\to^{n\to\infty} 1.
	\]
\end{proof}

From now on we will work in $2^\omega$ exclusively. By $\lambda$ we will denote standard product measure on $2^\omega$. We will use the same notation for standard product measure on $2^\omega\times 2^\omega$.
\\
Let $\sigma\in 2^{<\omega}$. For a given set $A\se [\sigma]^2$ let us denote $A^s=A\cap A^{-1}$, where $A^{-1}=\{(x,y): (y,x)\in A\}$. Let us observe that if $\lambda(A)=(1-\eps)\lambda([\sigma])^2$, then $\lambda(A^s)=(1-2\eps)\lambda([\sigma])^2$. For every set $B$ we will denote a set of its density points by $B^*$.
\begin{theorem}\label{fulluniformly}
	Let $F$ be a subset of $2^\omega\times 2^\omega$ of full measure. Then there exists a uniformly perfect tree $T\se2^{<\omega}$ satisfying $[T]\times [T] \se F\cup \Delta$.
\end{theorem}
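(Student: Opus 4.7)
Build $T$ level by level, choosing nodes $\tau_s\in 2^{n_{|s|}}$ for $s\in 2^{<\omega}$, where $0=n_0<n_1<\cdots$ is an increasing sequence determined along the way. To enforce uniform perfectness, for each $s\in 2^k$ pick a common extension $\tau_s^*\in 2^{n_{k+1}-1}$ of $\tau_s$ and set $\tau_{s\concat i}=\tau_s^*\concat i$ for $i\in\{0,1\}$; then $T=\{\sigma:(\exists s)(\sigma\se\tau_s)\}$ splits precisely at the levels $n_k-1$, at which every node of $T$ splits, so $T$ is uniformly perfect by construction.

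\textbf{Preliminary reduction.} Replace $F$ by the symmetric set $F\cap F^{-1}$, still of full measure, and fix an increasing sequence $K_1\se K_2\se\cdots\se F\cap F^{-1}$ of compact sets with $\lambda(K_n)\ge 1-2^{-n}$. Since each $K_n$ is closed, every density point of $K_n$ lies in $K_n$, i.e.\ $K_n^{*}\se K_n$, so it suffices to arrange that for every pair $x\ne y$ in $[T]$ splitting in $T$ at level $j(x,y)$, the point $(x,y)$ is a density point of some $K_{m_j}$ for a sequence $m_j\to\infty$ to be chosen.

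\textbf{Invariant and inductive step.} Fix a summable sequence $\eps_k\downarrow 0$ with generous slack (say $\eps_k=2^{-k^2}$) and maintain
$$(\star_k)\quad\lambda\bigl(K_{m_{j(u,v)}}\cap[\tau_u]\times[\tau_v]\bigr)\ge (1-\eps_k)\,\lambda([\tau_u])\,\lambda([\tau_v])\qquad\text{for all distinct }u,v\in 2^k,$$
where $j(u,v)\le k$ is the first position at which the addresses $u$ and $v$ differ. For the step $k\to k+1$, sibling pairs $(\tau_{s\concat 0},\tau_{s\concat 1})$ above a common parent are handled by applying the observation $\lambda(A^s)\ge(1-2\eps)\lambda([\sigma])^2$ from just before the theorem to $A=K_{m_{k+1}}\cap[\tau_s]^2$ (whose deficit is controlled provided $m_{k+1}$ is large enough relative to $n_k$), followed by Fubini on the symmetric part to select a common extension $\tau_s^*$ of length $n_{k+1}-1$ with the desired bound on $[\tau_{s\concat 0}]\times[\tau_{s\concat 1}]$. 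Cross pairs with distinct parents inherit the condition as sub-rectangles of $[\tau_s]\times[\tau_{s'}]$; the loss incurred by the geometric amplification $2^{2(n_{k+1}-n_k)}$ is absorbed using the slack in $\eps_k$ and by selecting all of the $\tau_s^*$ simultaneously via an averaging argument over the product $\prod_{s\in 2^k}[\tau_s]$.

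\textbf{Limit argument and main obstacle.} For $(x,y)\in[T]^2$ with $x\ne y$ splitting at level $j$, let $u_k,v_k\in 2^k$ denote the addresses of $x,y$ at level $k\ge j$; the boxes $[\tau_{u_k}]\times[\tau_{v_k}]$ shrink to $\{(x,y)\}$ and $(\star_k)$ forces the density of $K_{m_j}$ in them to tend to $1$, so $(x,y)\in K_{m_j}^{*}\se K_{m_j}\se F$, as required. The genuine technical obstacle is the rate bookkeeping: $\eps_k$ must decay fast enough to force density $\to 1$ at the limit, yet leave enough slack for cross-pair inheritance despite the $2^{2(n_{k+1}-n_k)}$ amplification of the deficit, and the sequences $n_k$, $m_j$, together with the symmetric-part trick $A\mapsto A^s$, must be orchestrated so that at every stage both the sibling Fubini extraction and the simultaneous cross-pair selection remain feasible.
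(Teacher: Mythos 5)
Your overall strategy — build $T$ level by level via density-point zoom-ins, maintain a density invariant for the surviving compact pieces of $F$, and pass to the limit — is the same general strategy the paper uses, and you correctly locate the crux: the simultaneous choice of all $\tau_s^*$ at step $k+1$ together with the $2^{2(n_{k+1}-n_k)}$ amplification of the deficit on cross pairs. But you leave that crux open (you call it ``the genuine technical obstacle''), and the averaging argument you sketch does not close it. There are in fact two concrete problems.

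First, your rigid sibling structure $\tau_{s\concat i}=\tau_s^*\concat i$ forces $\tau_{s\concat 0}$ and $\tau_{s\concat 1}$ to differ at exactly one coordinate, the last. The Lebesgue density theorem for a density point $(z,z')$ of $K_{m_{k+1}}\cap[\tau_s]^2$ controls the relative measure in $[z\rest N]\times[z'\rest N]$ for all \emph{large} $N$; but in your construction $N$ must equal $(\text{first difference of }z,z')+1$, which is not large in general, and the set of pairs that differ at exactly one late coordinate is a measure-zero slab, so you cannot find a density point inside it. So even the sibling step is not guaranteed to produce the bound you need, before the cross-pair difficulties start.

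Second, and more fundamentally, your averaging over $\prod_{s\in 2^k}[\tau_s]$ must reconcile pairwise conditions on $\binom{2^k}{2}$ cross pairs with per-$s$ density-point conditions, and you have no mechanism that makes the cross-pair densities survive the zoom-in that the siblings demand; $n_{k+1}-n_k$ is only known after the zoom-in, while $\eps_k$ was committed before. The paper resolves both problems at once by a translation device: it attaches to each $\tau_\sigma$ the point $t_\sigma=\tau_\sigma\concat 0\concat 0\concat\cdots$, translates every box $([\tau_\sigma]\times[\tau_\eta])\cap F_{k_{|\sigma\cap\eta|}}$ by $(t_\sigma,t_\eta)$ so that they all sit inside the common clopen box $[0^{N_n}]\times[0^{N_n}]$, and intersects them to form a single set $B_n$. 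A single density point $(x_{n+1},y_{n+1})$ of $B_n$ then simultaneously controls all $2^{2n+2}$ pairs, with total deficit at most $2^{2n+2}\eps_{n+1}\cdot 2^{-2N_{n+1}}$, independent of $N_{n+1}$ — there is no amplification to beat. The next level is defined by adding $t_\sigma$ back: $\tau_{\sigma\concat 0}=x_{n+1}\rest N_{n+1}+t_\sigma\rest N_{n+1}$ and $\tau_{\sigma\concat 1}=y_{n+1}\rest N_{n+1}+t_\sigma\rest N_{n+1}$. Since the increment appended below every $\tau_\sigma$ is the same pair of strings $x_{n+1}\rest[N_n,N_{n+1})$, $y_{n+1}\rest[N_n,N_{n+1})$, all nodes at a given level split at exactly the same set of coordinates (wherever those two strings differ), so uniform perfectness falls out automatically \emph{without} forcing the siblings to differ at a single coordinate. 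That is the idea your proof is missing.
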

\begin{proof}
	Let $F\se 2^\omega\times 2^\omega$ be a set of full measure and let us assume that $F=\bigcup_{n\in\omega}F_n$, where $(F_n:n\in\omega)$ is an ascending sequence of closed sets. Let us fix a sequence $\varepsilon_n=\frac{1}{2^{2n+3}}$, $n\in\omega$.
We shall construct inductively:
\begin{itemize}
	\item a collection of clopen sets $\{[\tau_{\sigma}]: \sigma\in 2^{<\omega}\}$;
	\item two sequences of natural numbers $(k_{n}: n\in\omega)$ and $(N_n: n\in\omega\bez\{0\})$;
	\item a sequence of pairs $((x_n,y_n): n\in\omega\bez\{0\})$ from $2^\omega\times 2^\omega$;
	\item a collection of points $\{t_{\sigma}: \sigma\in 2^{<\omega}\}$ from $2^\omega$;
	\item a sequence $(B_n: n\in\omega)$ of subsets of $2^\omega\times 2^\omega$;
\end{itemize}
satisfying the following conditions for all $\sigma, \eta\in 2^{<\omega}$ and $n\in\omega$:
\begin{enumerate}
	\item $\tau_\sigma\se \tau_\eta\IFF\sigma\se\eta$;
	\item $|\sigma|=|\eta|\IMP[\tau_\sigma] \times [\tau_\eta]\cap F_{k_{|\sigma\cap\eta|}}\neq\0$;
	\item $|\sigma|=|\eta|\IMP|\tau_\sigma|=|\tau_\eta|;$
	\item The set
	\[
		B_n=\bigcap_{\sigma, \eta \in \{0,1\}^{n}}\Big(\big(([\tau_\sigma]\times [\tau_\eta])\cap F_{k_{|\sigma\cap\eta|}}\big)-(t_\sigma, t_\eta)\Big)^s
	\]
	has a positive measure.
\end{enumerate}
	Let $\tau_\0=\0, t_\0=0\concat 0\concat ...,$ and set
	\begin{align*}
		k_0&=\min\{k\in\omega: \lambda(F_k)>1-\eps_0\},
		\\
		B_0&=F_{k_0}^s.	
	\end{align*}
	Next, let $(x_1,y_1)\in B_0^*$, with a requirement $x_1\neq y_1$, and set
	\[
		N_1=\min\{N\in\omega: \lambda\big([x_1\rest N]\times [y_1\rest N])\cap B_0\big)>\frac{1}{2^{2N}}(1-\eps_1)\;\land\;x_1\rest N\neq y_1\rest N\}.
	\]
	Then set
	\begin{align*}
		\tau_0&=x_1\rest N_1,
		\\
		\tau_1&=y_1\rest N_1,
	\end{align*}
	and let $t_i\in 2^{\omega}$ be such that $t_i\rest N_1=\tau_i$ and $t_i(n)=0$ for $n\geq N_1$, $i\in\{0,1\}$. Also, set
	\[
		k_1=\min\{k>k_0: (\forall \sigma, \eta\in\{0,1\})\Big(\lambda\big(\big(([\tau_\sigma]\times [\tau_\eta])\cap F_{k}\big)-(t_\sigma, t_\eta)\big)\big)>\frac{1}{2^{2N_1}}(1-\eps_1)\Big)\}
	\]	
	and
	\[
		B_1=\bigcap_{\sigma, \eta\in\{0,1\}}\Big(\big(([\tau_\sigma]\times [\tau_\eta])\cap F_{k_{|\sigma\cap\eta|}}\big)-(t_\sigma, t_\eta)\Big)^s
	\]
	Let us execute the step $n+1$, $n>0$. We pick $(x_{n+1}, y_{n+1})\in B_n^*$, $x_{n+1}\neq y_{n+1}$, and set
	\begin{align*}
		N_{n+1}=\min\{N>0:\; &\lambda\big(([x_{n+1}\rest N]\times [y_{n+1}\rest N])\cap B_n\big)>\frac{1}{2^{2N}}(1-\eps_{n+1})\;\land
		\\
		&\land \; x_{n+1}\rest N\neq y_{n+1}\rest N\}.
	\end{align*}
	Then for every $\sigma\in \{0,1\}^n$ let
	\begin{align*}
		\tau_{\sigma\concat 0}&=x_{n+1}\rest N_{n+1}+t_\sigma\rest N_{n+1},
		\\
		\tau_{\sigma\concat 1}&=y_{n+1}\rest N_{n+1}+t_\sigma\rest N_{n+1},
	\end{align*}
	and for $i\in\{0,1\}$ let
	\begin{align*}
		t_{\sigma\concat i}={\tau_{\sigma\concat i}}\concat 0\concat 0 \concat ...\,.
	\end{align*}
	Let us set
	\begin{align*}
		k_{n+1}=\min\{k>k_n:\; & (\forall \sigma, \tau\in\{0,1\}^{n+1})
		\\
		& \Big(\lambda\big(\big(([\tau_\sigma]\times [\tau_\eta])\cap F_{k}\big)-(t_\sigma, t_\eta)\big)\big)>\frac{1}{2^{2N_{n+1}}}(1-\eps_{n+1})\Big).
	\end{align*}
	Finally, let us set
	\[
		B_{n+1}=\bigcap_{\sigma, \eta \in \{0,1\}^{n+1}}\Big(\big(([\tau_\sigma]\times [\tau_\eta])\cap F_{k_{|\sigma\cap\eta|}}\big)-(t_\sigma, t_\eta)\Big)^s.
	\]
	Since
	\[
		\lambda(B_{n+1})>\frac{1}{2^{2N_{n+1}}}(1-2^{2n+2}\eps_{n+1})>0,
	\]
	we may carry on with the construction, thus it is complete. A set
	\[
		T=\{\tau\in 2^{<\omega}: (\exists \sigma\in 2^{<\omega})(\tau\se\tau_\sigma)\}
	\]
	is the uniformly perfect tree we were looking for.
\end{proof}

Now, let us recall the notion of small sets (see \cite{BartJud}) connected to null subsets of $2^\omega$.
\begin{definition}
	$A\se 2^\omega$ is a small set if there is a partition $\mathcal{A}$ of $\omega$ into finite sets and a collection $(J_a)_{a\in\mathcal{A}}$ such that $J_a\subseteq 2^a$, $\sum_{a\in\mathcal{A}}\frac{|J_a|}{2^{|a|}}<\infty$ and
	$$
	A=\{x\in 2^\omega:\ (\exists^\infty a\in\mathcal{A})( x\rest a\in J_a)\}.
	$$
\end{definition}

Let us remark that each small set is a null set. Moreover, every null set is a union of two small sets (see \cite{BartJud}).


The space $2^\omega\times 2^\omega$ is canonically homeomorphic to $2^\omega$, so it is natural to consider a notion of small set in $2^\omega\times 2^\omega$.

\begin{proposition}\label{fullSilver}
	There exist a small set $A\se 2^\omega\times 2^\omega$ such that $(A\cap [S]\times [S])\setminus \Delta \neq\emptyset$ for any Silver tree $S\se 2^{<\omega}.$
\end{proposition}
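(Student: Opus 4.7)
The plan is to exploit the rigid structure of Silver trees. Recall that a Silver tree $S\se 2^{<\omega}$ is determined by an infinite set $C_S\se\omega$ of ``free'' coordinates together with a function $g_S:\omega\bez C_S\to 2$, so that $[S]=\{x\in 2^\omega : x\rest(\omega\bez C_S)=g_S\}$. Hence every pair in $[S]\times [S]$ coincides with $g_S$ off $C_S$ and differs, if at all, only at coordinates inside $C_S$. The crucial observation is that since $C_S$ is infinite, it must meet infinitely many pieces of any partition of $\omega$ into finite sets.

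I would construct $A$ as follows. Fix a partition $\omega=\bigsqcup_{n\in\omega} a_n$ with $|a_n|=n$, and take the corresponding partition of the index set $\omega\sqcup\omega$ of $2^\omega\times 2^\omega$ into blocks $b_n=a_n\sqcup a_n$. On each block I would put
\[
J_{b_n}=\{(\sigma,\tau)\in 2^{a_n}\times 2^{a_n}: \sigma \text{ and } \tau \text{ differ in exactly one coordinate}\}.
\]
Then $|J_{b_n}|=n\cdot 2^n$ and $2^{|b_n|}=4^n$, so $\sum_n |J_{b_n}|/2^{|b_n|}=\sum_n n/2^n<\infty$. Thus
\[
A=\{(x,y)\in 2^\omega\times 2^\omega : (\existsinfty n)\,(x,y)\rest b_n\in J_{b_n}\}
\]
is a small subset of $2^\omega\times 2^\omega$ (under the canonical identification $2^\omega\times 2^\omega\cong 2^{\omega\sqcup\omega}$).

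To verify that $A$ hits $([S]\times[S])\bez\Delta$ for an arbitrary Silver tree $S$, I would let $N=\{n\in\omega : C_S\cap a_n\neq\0\}$, which is infinite by the opening observation, and pick one $i_n\in C_S\cap a_n$ for each $n\in N$. Define $x\in 2^\omega$ by $x\rest(\omega\bez C_S)=g_S$ and $x\equiv 0$ on $C_S$, and let $y$ agree with $x$ everywhere except that $y(i_n)=1$ for every $n\in N$. Then $x,y\in [S]$ and $x\neq y$ (they disagree on infinitely many coordinates); moreover, for every $n\in N$ the sequences $x$ and $y$ differ within the block $a_n$ only at $i_n$, so $(x,y)\rest b_n\in J_{b_n}$. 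This places $(x,y)$ in $A\cap([S]\times[S])\bez\Delta$, as required.

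I do not anticipate any serious obstacle: the whole argument is driven by the elementary observation that the infinite set $C_S$ must meet infinitely many blocks of a partition of $\omega$ into finite sets. The only bookkeeping is in reconciling the definition of a small set in $2^\omega$ with its natural counterpart in $2^\omega\times 2^\omega$ and in checking the summability bound $\sum n/2^n<\infty$, both of which are entirely routine.
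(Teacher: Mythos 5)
Your proof is correct and follows essentially the same strategy as the paper's: build a small set from a partition of the coordinates into finite blocks, and use the homogeneity of Silver trees (an infinite set of free levels) to produce two distinct branches whose difference pattern lands in infinitely many of the chosen generators $J$. The only difference is cosmetic — the paper takes $J_{n,n}$ to be the diagonal pairs on each block and witnesses membership with two branches of $[S]$ that agree except at one coordinate, whereas you take the ``differ in exactly one coordinate'' pairs — and both verifications of smallness and of the intersection with $([S]\times[S])\setminus\Delta$ go through.
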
	
\begin{proof}
	Let $\{I_n\}_{n\in\omega}$ be a partition of $\omega$ into finite segments such that $|I_n|\ge n$.\\
	Clearly, $\{I_n\times I_m\}_{n,m\in\omega}$ forms a partition of $\omega\times\omega$.
	Define
	$$J_{n,m}=\left\{\begin{array}{l@{\;\text{ if }\;}l}
	\emptyset & n\neq m\\
	\{(x,x):\ x\in 2^{I_n}\} & n=m
	\end{array} \right.$$
	Notice that $\displaystyle\frac{|J_{n,n}|}{2^{|I_n\times I_n|}}=\frac{1}{2^{|I_n|}}\le\frac{1}{2^n}.$ So
	$$
	A=\{(x,y)\in 2^\omega\times 2^\omega:\ (\exists^\infty n\in\omega)( x\rest I_n = y\rest I_n)\}
	$$
	is a small set.
	Let $S$ be a Silver tree. Let $x,y\in [S]$ be such that $(\forall^\infty k)( x(k)=y(k))$, but $x\neq y$.
	Clearly, $(x,y)\in A\setminus\Delta$.
\end{proof}

\begin{proposition}\label{full contains Silver}
	Every closed subset of $2^\omega$ of positive Lebesgue measure contains a Silver tree.
\end{proposition}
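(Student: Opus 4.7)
The plan is to construct a Silver tree $T \se 2^{<\w}$ level by level inside $F$ by a standard Lebesgue density argument, maintaining at every stage that all current cylinders still capture $F$ with density close to $1$. Fix beforehand a rapidly decreasing sequence $(\eps_k)_{k\in\w}$ of positive reals with $2^k \eps_k \to 0$ (for instance $\eps_k = 2^{-2k-5}$); these will control how close to $1$ the conditional density of $F$ inside each cylinder at stage $k$ must remain.

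The inductive setup is as follows. At stage $k$ I will have splitting levels $n_0 < n_1 < \dots < n_{k-1}$, a partial function $f \colon \{0,\dots,n_{k-1}\} \setminus \{n_0,\dots,n_{k-1}\} \to 2$, and the associated Silver-type strings $\tau_s \in 2^{n_{k-1}+1}$ for $s \in 2^k$ defined by $\tau_s(n_i)=s(i)$ and $\tau_s(j)=f(j)$ elsewhere, all satisfying $\lambda(F \cap [\tau_s]) > (1-\eps_k)\, 2^{-(n_{k-1}+1)}$. The base case is obtained by the Lebesgue density theorem applied to $F$: pick any $\tau_\emptyset \in 2^{<\w}$ (the stem) with $\lambda(F \cap [\tau_\emptyset]) > (1-\eps_0)\, 2^{-|\tau_\emptyset|}$ and define $f$ on $\{0,\dots,|\tau_\emptyset|-1\}$ so as to match the bits of $\tau_\emptyset$.

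The main step passes from stage $k$ to stage $k+1$. I rescale each cylinder back to $2^\w$ by setting $E_s = \{x \in 2^\w : \tau_s \concat x \in F\}$, so $\lambda(E_s) > 1 - \eps_k$, and form $G = \bigcap_{s \in 2^k} E_s$, which has measure $> 1 - 2^k \eps_k$. I pick a Lebesgue density point $y$ of $G$ and choose $p$ large enough that both $\lambda(G \cap [y\rest p]) > (1-\delta)\, 2^{-p}$ and $\lambda(G \cap [y\rest(p-1)]) > (1-\delta)\, 2^{-(p-1)}$ hold for a prescribed small $\delta > 0$. Subtracting the first inequality from the second yields $\lambda(G \cap [y\rest(p-1)\concat\langle 1-y(p-1)\rangle]) > (1-2\delta)\, 2^{-p}$, so \emph{both} length-$p$ cylinders extending $y\rest(p-1)$ have $G$-density at least $1-2\delta$. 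I then declare $n_k = n_{k-1}+p$, extend $f$ by $y\rest(p-1)$ on the block of new non-splitting positions, and put $\tau_{s\concat a} = \tau_s \concat y\rest(p-1)\concat \langle a\rangle$ for $s \in 2^k$, $a \in \{0,1\}$. Rescaling back and choosing $\delta$ small enough that $2\delta < \eps_{k+1}$ propagates the invariant to stage $k+1$.

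Let $T$ denote the Silver tree with splitting set $\{n_k : k\in\w\}$ and fixing function $f$, so $[T] = \bigcap_k \bigcup_{s\in 2^k}[\tau_s]$. For any $x \in [T]$ the corresponding descending chain of cylinders $[\tau_{s_k}]$ shrinks to $\{x\}$, and each $F \cap [\tau_{s_k}]$ is nonempty (in fact of positive measure); closedness of $F$ then gives $x \in F$, hence $[T]\se F$. The one delicate point is the simultaneous-density trick inside the induction step: one cannot merely zoom in on a density point of $G$ and split along an arbitrary coordinate, since a priori only the cylinder actually containing $y$ is guaranteed to inherit good density. Exploiting density of $y$ at the two consecutive scales $p$ and $p-1$ provides exactly the slack needed to flip the bit at position $p-1$ freely while keeping both successor cylinders of density close to $1$, and this is what makes the Silver-type (uniform) splitting compatible with density propagation.
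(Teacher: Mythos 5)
Your proof is correct and follows essentially the same strategy as the paper's: an inductive construction of the splitting levels in which all $2^k$ current cylinders are kept aligned (you via the canonical rescaling $[\tau_s]\cong 2^\omega$, the paper via translations by the vectors $t_s$), a simultaneous density point of the common intersection is chosen, and the split is performed using the observation that a cylinder of relative $F$-density $>1-\delta$ has both immediate successors of relative density $>1-2\delta$. The remaining differences are only bookkeeping.
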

\begin{proof}
	Let $F\se2^\omega$ be a closed set of positive measure. Let $\eps_n=\frac{1}{2^{n+3}}$ for every $n\in\omega$. Let $x_0$ be a density point of $F$ and let $\sigma_0\se x_0$ such that $\lambda([\sigma_0]\cap F)>(1-\eps_0)\lambda([\sigma_0])$. Since
	\[
		\lambda([\sigma_0]\cap F)=\lambda([{\sigma_0}\concat 0]\cap F)+\lambda([{\sigma_0}\concat 1]\cap F),
	\]
	we have
	\[
		\lambda([{\sigma_0}\concat i]\cap F)>(\frac{1}{2}-\eps_0)\lambda([\sigma_0]),\, i\in\{0,1\}.
	\]
	Let $t_1=(\underbrace{0,0,...,0}_{|\sigma_0|},1,0,...)$ and let us observe that $([{\sigma_0}\concat 1]\cap F)-t_1\se [{\sigma_0}\concat 0]$. Since
	\[
		\lambda([{\sigma_0}\concat i])=\frac{1}{2}\lambda([\sigma_0])
	\]
	for $i\in\{0,1\}$, we have
	\[
		\lambda\big([{\sigma_0}\concat 0] \cap F \cap([{\sigma_0}\concat 1]\cap F)-t_1)\big)\geq (\frac{1}{2}-2\eps_0)\lambda([\sigma_0])>0.
	\]
	Let us assume that at the step $n+1$ we have a sequence $(\sigma_k: k\leq n)$ of finite $0$-$1$ sequences. Let $0_k=(\underbrace{0,0,...,0}_{k})$ and for every $s\in 2^{n+1}$ let us denote
	\begin{align*}
		\tau_s&={\sigma_0}\concat s(0)\concat {\sigma_1}\concat s(1)\concat ... \concat {\sigma_n} \concat s(n),
		\\
		t_s&={0_{|\sigma_0|}}\concat s(0)\concat {0_{|\sigma_1|}}\concat s(1)\concat ... {0_{|\sigma_n|}}\concat s(n)\concat 0 \concat...\, ,
	\end{align*}
	and assume that a set
	\[
		B_n=\bigcap_{s\in 2^{n+1}}\big(([\tau_s]\cap F)-t_s)\big)
	\]
	has a positive measure.	Let $x_{n+1}\in B_n^*$. Then
	\begin{align*}
		&x_{n+1}+t_s\in([\tau_s]\cap F)^*,
	\end{align*}
	for every $s\in 2^{n+1}$. Let us observe that for a given sequence $\sigma\in 2^{<\omega}$ satisfying ${\tau_{0_{n+1}}}\concat \sigma\se x_{n+1}$ it is also true that ${\tau_{s}}\concat \sigma\se x_{n+1}+t_s$ for every $s\in 2^{n+1}$. Hence, we may pick $\sigma_{n+1}$ such that for every $s\in 2^{n+1}$
	\[
		\lambda(([{\tau_s}\concat\sigma_{n+1}]\cap F)>(1-\eps_{n+1})\lambda([{\tau_{0_{n+1}}}\concat\sigma_{n+1}]).
	\]
	Similarly to the first step, we see that for every $s\in 2^{n+1}$
	\[
		\lambda\big(([{\tau_s}\concat{\sigma_{n+1}}\concat i]\cap F\big)\geq (\frac{1}{2}-\eps_{n+1})\lambda([{\tau_s}\concat{\sigma_{n+1}}])=(1-2\eps_{n+1})\lambda([\tau_{0_{n+2}}])
	\]
	and eventually
	\[
		\lambda (B_{n+1})=\lambda \Big(\bigcap_{s\in 2^{n+2}}\big(([\tau_s]\cap F)-t_s\big)\Big)>(1-2^{n+3}\eps_{n+1})\lambda([{\tau_{0_{n+2}}}])=(1-\frac{2^{n+3}}{2^{n+4}})\lambda([{\tau_{0_{n+2}}}])>0.
	\]
	This allows us to carry on with the construction, thus it is complete. Then
	\[
		T=\{\sigma\in 2^{<\omega}: (\exists s\in 2^{<\omega})(\sigma\se\tau_s)\}
	\]
	is the desired Silver tree.
\end{proof}

\section{Nonstandard proofs}
In this section we prove a result concerned with implications of adding a Cohen real. As a consequence we obtain a nonstandard proof of strengthened two-dimensional version of Mycielski Theorem (see \cite{Mycielski}). We use Shoenfield Absoluteness Theorem. Using similar methods we prove a strengthened Egglestone Theorem (see \cite{Eggleston}).

By canonical Polish spaces we understand countable products of $\baire, 2^\w, [0,1],\bbr$ and $\Perf(\RR)$ - a space of perfect subsets of $\RR$. We say that $\varphi$ is $\Sigma_2^1$-sentence if for some canonical Polish spaces $X,Y$ and Borel set $B\subseteq X\times Y$ $\varphi$ is of the form:
$$
(\exists x\in X)(\forall y\in Y) (x,y) \in B.
$$
The Borel set $B$ has its so called Borel code $b\in\baire$ (see \cite{Kech}). The triple $(X, Y, b)$ is a parameter of our $\Sigma^1_2$-sentence $\varphi$.
Now, let us recall Schoenfield Absoluteness Theorem.
\begin{theorem}[Schoenfield]\label{Schoenfield} Let $M\subseteq N$ be standard transitive models of ZFC and ${\w_1^N \subseteq M.}$ Let $\varphi$ be a $\Sigma_2^1$-sentence with a parameter from model $M.$ Then
\[
	M\models \varphi \IFF N\models \varphi.
\]
\end{theorem}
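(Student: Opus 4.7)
The plan is to prove the theorem via the classical Shoenfield tree, which reduces $\Sigma^1_2$-absoluteness to absoluteness of well-foundedness of trees. First I would reformulate the $\Sigma^1_2$-sentence $\varphi \equiv (\exists x \in \w^\w)(\forall y \in \w^\w)\, B(x,y,z)$, where $z \in M$ is the parameter and $B$ is Borel (whose Borel code may be absorbed into $z$). The inner $\Pi^1_1$ part $(\forall y)\,B(x,y,z)$ can be recast, via the standard Kleene normal form, as well-foundedness of a tree $T_{x,z} \subseteq \w^{<\w}$ recursive in $(x,z)$. Thus $\varphi$ is equivalent to: there exists $x \in \w^\w$ such that $T_{x,z}$ is well-founded.

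Next I would construct the Shoenfield tree $S_z$ on $\w \times \w_1$ whose branches encode pairs $(x,f)$ with $x \in \w^\w$ and $f : \w \to \w_1$ inducing a ranking function for $T_{x,z}$. Concretely, fixing an enumeration $\{\sigma_n\}$ of $\w^{<\w}$, a pair $(s,e) \in \w^n \times \w_1^n$ belongs to $S_z$ iff $e(k) > e(l)$ whenever $k, l < n$, $\sigma_k \subsetneq \sigma_l$, and $\sigma_l$ lies in the finite portion of $T_{s, z \restriction n}$ visible at stage $n$. Then $\varphi$ holds if and only if $S_z$ has an infinite branch. Since $z \in M$ and, by the hypothesis $\w_1^N \subseteq M$, all ordinals below $\w_1^N$ lie in $M$, the definition of $S_z$ (with second coordinate truncated to $\w_1^N$) yields the same tree in $M$ and in $N$; no branches are lost by this truncation, because ranking functions of countable well-founded trees range below $\w_1^N$.

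The final step invokes absoluteness of well-foundedness for trees, which holds between transitive models of ZFC: if $T \in M \subseteq N$, then $T$ is well-founded in $M$ if and only if $T$ is well-founded in $N$. A ranking function in $M$ is automatically a ranking function in $N$, giving upward absoluteness; downward, an infinite branch in $N$ would contradict the existence of an $M$-ranking function, which remains a ranking function in $N$. Applying this to $S_z$ yields $M \models \varphi$ if and only if $N \models \varphi$. The main obstacle is ensuring the Shoenfield tree is genuinely the same object in $M$ and $N$; this is exactly where the hypothesis $\w_1^N \subseteq M$ enters, guaranteeing that all ordinals appearing in branches of $S_z$ are available inside $M$, so that branches computed in $N$ can be viewed as branches in $M$ and conversely.
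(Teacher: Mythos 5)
The paper does not prove this theorem; it is cited as the classical Shoenfield Absoluteness Theorem (a standard result of descriptive set theory, available e.g.\ in Jech or Kechris) and used as a black box. So there is no ``paper's own proof'' to compare against. Evaluated on its own terms, your sketch is the standard proof and is essentially correct in outline: rewrite the inner $\Pi^1_1$ matrix as well-foundedness of a recursive-in-$(x,z)$ tree $T_{x,z}$ via Kleene normal form, build the Shoenfield tree $S_z$ on $\omega\times\omega_1$ whose branches encode a real $x$ together with a ranking function of $T_{x,z}$, observe that $\varphi$ is equivalent to ill-foundedness of $S_z$, and conclude by absoluteness of well-foundedness for trees lying in both models.

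Two points of imprecision are worth tightening. First, the last sentence about branches being transferable ``conversely'' is slightly off: a branch of $S_z$ found in $N$ need not itself lie in $M$; what you actually get is that ill-foundedness of $S_z$ in $N$ forces ill-foundedness of $S_z$ in $M$ (otherwise an $M$-ranking function would lift to $N$ and contradict the $N$-branch), after which $M$ produces its \emph{own} infinite branch. Second, you should check that the equivalence ``$\varphi$ holds iff $S_z$ has a branch'' is verified \emph{inside each model separately}: in $M$ this requires that whenever $T_{x,z}$ is well-founded with $x\in M$, the canonical rank function lands below $\omega_1^N$ and lies in $M$. This follows from absoluteness of the rank function between $M$ and $N$ plus the fact that $T_{x,z}$ is countable in $N$, so its rank is below $\omega_1^N$; the hypothesis $\omega_1^N\subseteq M$ then ensures the relevant ordinals (hence the truncated tree $S_z$ itself, as a set of finite sequences from $\omega\times\omega_1^N$) are available in $M$. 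It is also worth noting explicitly that the direction $M\models\varphi\Rightarrow N\models\varphi$ is the easy half and requires none of the tree machinery --- it is just upward absoluteness of $\Sigma^1_2$ --- while the Shoenfield tree is needed only for the downward direction.
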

Let us observe that if $N$ is a generic extension of standard transitive model $M$ of ZFC then $Ord^M = Ord^N$ and $\w_1^N\subseteq M.$

A method of providing nonstandard proofs of mentioned theorems will be as follows. We start with a standard transitive model $M$ of ZFC and find a generic extension $N$ of $M$ in which the theorem can be easily proved. Then we verify that the theorem forms a $\Sigma_2^1$-sentence. We apply Schoenfield Absoluteness Theorem to deduce that it is true in the ground universe $M.$

Before we proceed let us introduce some additional notation. For a tree $T\se\w^{<\w}$ we define
	\[
		\tips(T) = \{ \sigma\in T:\; \neg(\exists \tau\in q)\; (\sigma\subseteq \tau \land \sigma\ne \tau) \}.
	\]
	Let us recall that for a tree $T\se\w^\w$ and a node $\sigma\in T$ we define
	\[
		\rank_T(\sigma)=\sup\{\rank_T(\tau)+1: \tau\in T \;\land\; \sigma\sen\tau\}.
	\]
We will denote a height of a given tree $T$ by $\tn{ht}(T)=\rank_T(\0)$.
\\
We say that a tree $T\se \w^{<\w}$ is
	\begin{itemize}
		\item evenly cut if 
		and there is $n\in\w$ such that $\tips(q)\subseteq \w^n$ and $\tn{ht}(q) = n$;
		\item a slalom tree if
		\[
			(\forall \sigma\in \w^{<\w}) (\exists I\se\omega) \big(I \tn{ is an interval } \land (\forall \tau\in T)(I\se |\tau|\IMP \tau\rest I= \sigma)\big).
		\]
	\end{itemize}
	Let observe that the definition of slalom trees is arithmetic and so it is absolute between transitive  models of ZFC. We will say that a set $P\subseteq \w^\w$ is slalom perfect if it is a body of a perfect slalom tree. Let us notice that for every slalom perfect set $P$ and every $\sigma\in\w^{<\w}$ there is an interval $I\subseteq \w$ such that for every $x\in P$ we have $x\restricted I = \sigma.$

\begin{theorem}\label{cohen_real_extension} After adding one Cohen real
	there is a perfect slalom tree $T$ such that ${[T]\times [T] \subseteq W\cup \Delta}$ for every dense $G_\delta$ set $W \subseteq \w^\w \times \w^\w$ from the ground model. 
\end{theorem}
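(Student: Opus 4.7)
The plan is to use Cohen forcing $\CC = \w^{<\w}$ (ordered by reverse extension), let $c \in \baire$ be a Cohen-generic real over $V$, and construct $T$ in $V[c]$ directly from $c$. First, in $V$ fix a partition of $\w$ into alternating singletons $\{a_n : n \in \w\}$ (designated splitting positions) and disjoint intervals $J_n$ (slalom intervals) lying strictly between consecutive $a_n$, arranging the lengths $|J_n|$ so that each $\ell \in \w$ equals $|J_n|$ for infinitely many $n$. In $V[c]$, set $s_n = c \rest J_n$ and define
\[
T = \{\tau \in \w^{<\w} : \forall n \, (J_n \se \dom(\tau) \IMP \tau \rest J_n = s_n)\}.
\]
Then $T$ is perfect, with $\w$-splitting at every $a_n$. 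Moreover, by Cohen genericity, for each $\sigma \in \w^{<\w}$ the set of conditions $p \in \CC$ determining $p \rest J_n = \sigma$ for some $n$ with $|J_n| = |\sigma|$ is dense in $\CC$, so $c$ meets it and every $\sigma \in \w^{<\w}$ equals some $s_n$. Thus $T$ is a slalom tree in the sense of the paper's definition.

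Next, fix a dense $G_\delta$ set $W = \bigcap_n U_n$ in $V$, with each $U_n$ dense open in $V$. The assertion $[T] \times [T] \se W \cup \Delta$ is $\Pi_1^1$ in the Borel code of $W$ and a $\CC$-name $\dot T$ for $T$ (the parameter $c$ suffices, since $T$ is computed continuously from $c$). By Shoenfield absoluteness it suffices to verify this in $V[c]$, and by the forcing theorem it reduces to showing, for each $V$-Borel code $b$ of a dense open $U$, that
\[
D_b = \{p \in \CC : p \force [\dot T] \times [\dot T] \bez \Delta \se \check U\}
\]
is dense in $\CC$. Given $q \in \CC$, one extends it stage by stage by running a bookkeeping of pairs of distinct partial splitting choices $(\bar \sigma, \bar \tau)$; at each stage, using the $\w$-splitting at each $a_n$ together with density of $U$, one chooses the next slalom values of $c$ so that each rectangle of partial branches forced by the extension is included in $U$.

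The main obstacle is that the slalom values forced by $p$ are shared by both coordinates of every pair of branches, so the pairs we are required to place inside $U$ live in a specific slice of $\baire \times \baire$ in which coordinates must agree on the slalom positions. The delicate step is to verify that density of $U$ in the ambient product still yields sub-rectangles compatible with this constraint; this uses the freedom to assign independently chosen distinct values at any splitting position, which supplies enough slack to fit the forced partial branches into any prescribed dense open $U$ chosen from $V$.
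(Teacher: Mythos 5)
Your construction cannot succeed, and the obstruction is not the ``delicate step'' you flag at the end but something the paper itself already rules out. The tree $T$ you build from the Cohen real has $\succe_T(\tau)=\w$ at every node whose length is a splitting position $a_n$, and (after pruning the nodes that die inside a block $J_n$) is a singleton elsewhere; hence $T$ is a Miller tree in $V[c]$. But Theorem \ref{Miller razy Miller nie} produces an open dense set $U\se\baire\times\baire$, definable from the canonical countable set $Q$ and therefore coded in the ground model, such that $[T]\times[T]\not\se U\cup\Delta$ for \emph{every} Miller tree $T$; that proof is a ZFC argument and applies verbatim in $V[c]$ to your $T$. Since an open dense set is in particular a dense $G_\delta$ set from the ground model, no choice of the Cohen real makes your $T$ work, and the set $D_b$ whose density your argument requires is actually empty for this $U$. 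The point where things break is precisely the $\omega$-splitting: a single splitting node of your tree has infinitely many successors whose continuations are forced to be identical (the same $s_n$'s and the same later splitting positions for all of them), and the diagonalization of Theorem \ref{Miller razy Miller nie} shows one cannot push all infinitely many resulting pairs into a prescribed dense open set. Replacing $\w$-splitting by finite splitting would not rescue the scheme either: the uniformity of your construction (every node of a given length has the same successor set) makes $T$ a Silver-type tree, which Proposition \ref{Silver razy Silver nie} defeats in the same way.

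The paper avoids both obstructions by forcing with finite, evenly cut trees ordered by end-extension above the tips. That ordering freezes the finite successor set of each node once it appears, so the generic tree $T_G$ is finitely branching and non-uniform away from the designated slalom intervals, and at every stage only finitely many pairs of tips must be placed inside a given dense open set --- exactly what Lemma \ref{lemat dla skonczonych ciagow} provides (part (2) of the Claim). If you insist on reading the tree off a Cohen real $c\in\baire$ directly, the real must encode, separately for each node, a finite set of successors and their (non-uniform) continuations, not a single sequence of slalom blocks shared by the whole tree; at that point you have essentially reconstructed the paper's poset $\cc$. Your use of Shoenfield absoluteness inside this proof is also superfluous: the statement to be verified lives in $V[c]$, and absoluteness is only needed afterwards to pull the conclusion back to the ground model.
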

\begin{proof} Let $V$ be a ground model of ZFC. We will show that after adding one Cohen real to $V$ there is a perfect tree $T$ such that ${[T]\times [T] \subseteq U \cup \Delta}$ for every open dense set $U\se \w^\w\times\w^w$ coded in $V$. Let us define a poset $(\cc,\le)$ as follows:
	\[
		\cc = \{ p\se \w^{<\w}:\; p \text{ is an evenly cut and finite tree}\},
	\]
	and for every $p,q\in \cc$
	\[
		p\le q \tn{ ($p$ is stronger than $q$)} \IFF q\subseteq p \;\land\; p\cap \w^{\tn{ht}(q)} = \tips (q).
	\]	
Clearly, $(\cc, \leq)$ is a forcing adding one Cohen real. Let $G\subseteq \cc$ be any $\cc$-generic filter over $V.$ In $V[G]$ let us define a generic set
${T_G = \bigcup G}$. 
We have the following
\begin{claim*}\label{generic_claim} The following statements are true:
\begin{enumerate}
    \item $T_G$ is a slalom perfect tree.
    \item For any open dense set $U\subseteq \w^\w$ in $V$ and natural $n$ a set
    $$
        D_{n,U}=\{p\in\cc:\; (\forall t,s\in\tips(p)) (n\le |t|,|s|\land (s\ne t) \IMP [t]\times [s] \subseteq U \}
    $$
    is dense in $(\cc,\le)$.
    \item Fix a name $\dot{x}\in V^\cc$ and $p,q\in G$. Assume that 
    \[
        p\force \dot{x}\in [T_G] \ti{ and } q\force \dot{x}\restricted n \subseteq s \ti{ for some } n\in\w \ti{ and } s\in q.
    \]
     Then there exists $r\in G$ and $m\ge n$ such that $r\le p,q$ and $r\force \dot{x}\restricted m \in \tips(q).$
\end{enumerate}
\end{claim*}
\begin{proof}[Proof of the Claim] (1) follows from the density argument. That is, to see that $T_G$ is a perfect tree let us observe that for every $p\in \cc$ and every $t\in p$ the set
\[
D_{p,t}=\{r\in\cc:\; (\exists s\in r)(\exists m,m'\in\w)(t\subseteq s \;\land\; m\ne m'\;\land\; s^\with m,s^\with m'\in r)\}
\]
is defined in $V$ and it is dense below $p.$ To prove that $T_G$ is a slalom tree it is enough to observe that for every $s\in\w^{<\w}$ the following ground model set
\[
E_s=\{ p\in\cc:\; (\exists I\se\w)\big(I\text{ is an interval}\;\land\; |I|=|s|\land (\forall t\in\tips(p))(t\restricted I = s)\big) \}
\]
is dense in $\cc$.
\\
(2) follows directly from Lemma \ref{lemat dla skonczonych ciagow}.
\\
To show (3) let $n\in\w$, $p,q\in G$ and $\dot{x}\in V^\cc$ such that ${p\force \dot{x}\in \dot{[T_G]}}$ and ${p\force \dot{x}\restricted n \in q.}$ Let $m_0=\tn{ht}(q)$. 
Then there exists $r',q'\in G$ such that ${r'\force \dot{x}\restricted m_0\in q'.}$ $G$ is a filter, hence there exists $r\in G$ such that $r\le q,q',r',p$ and $r\force \dot{x}\restricted{m_0}\in q'$, so ${r\force \dot{x}\restricted{m_0}\in r}$. Let us observe that $r\le q$ and $\dot{x}_G\restricted m_0\notin q.$ Then there is $t\in\tips(q)$ such that ${r\force t\subseteq \dot{x}_G\restricted m_0\subseteq \dot{x}_G.}$
\end{proof}
Now let $\dot{x},\dot{y}\in V^\cc$ and $p\in G$, $k\in\w$ be such that ${p\force \dot{x},\dot{y}\in\dot{[T_G]}}$ and ${\dot{x}\restricted{k} \ne \dot{y}\restricted{k}.}$ Then there are $p_1,q_x,q_y\in G$ such that $p_1\le p$ and for some $n_x,n_y> k$ we have ${p_1\force \dot{x}\restriction n_x\in q_x\land \dot{y}\restriction n_y\in q_x}$. Since $G$ is a filter in $(\cc,\le)$, there exist $p',q\in G$ such that $p'\le p_1,$ $q\le q_x,q_y$ and $p'\force \dot{x}\restriction n_x,\dot{y}\restriction n_y\in q.$ By (1) of Claim 
there is a condition $q'\in G$ such that $q'\le q$ and for every $t,s\in \tips(q')$ if $t\ne s$ then $[t]\times [s] \subseteq U.$ By (2) of Claim we can find a generic condition $r\in G$ such that $r\le p'$ and there are $s,t\in\tips(q')$ such that $r\force \dot{x}\restriction m_x,\dot{y}\restriction=s\land m_y =t$ for some $m_x\ge n_x\ge k$ and $m_y\ge n_y\ge k.$ Then for the $r\in G$ we have
\[
	r\force (\dot{x},\dot{y})\in [\dot{x}\restriction m_x]\times [\dot{y}\restriction m_y] \in \hat{U}.
\]
The dense open set $U$ from the ground model was chosen arbitrarily, hence $[T_G]\times [T_G] \subseteq W \cup \Delta$ for any dense $G_\delta$ planar set $W$ from $V[G]$.
\end{proof}

\begin{theorem}
For every $G\in G_\delta$ dense set in $ \w^\w \times \w^\w$ there exists a slalom perfect set $P\subseteq \w^\w$ satisfying	$P\times P \subseteq G\cup \Delta.$ 
\end{theorem}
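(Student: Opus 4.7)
The plan is to derive this from Theorem \ref{cohen_real_extension} via Shoenfield Absoluteness (Theorem \ref{Schoenfield}), following the paradigm announced at the start of this section.

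First I would rephrase the conclusion as a $\Sigma_2^1$-sentence with a real parameter. A slalom perfect tree $T\se\w^{<\w}$ is coded by a real (e.g.\ its characteristic function), and by the observation preceding Theorem \ref{cohen_real_extension} the property ``$T$ is a slalom perfect tree'' is arithmetic, hence absolute between transitive models of ZFC. If $b$ is a Borel code for the $G_\delta$ dense set $G=\bigcap_n U_n$, then the inclusion $[T]\times [T]\se G\cup\Delta$ unfolds to the arithmetic-in-$(T,b)$ condition
\[
(\forall x,y\in\w^\w)\bigl(x\in[T]\;\land\;y\in[T]\;\land\;x\ne y\IMP (\forall n\in\w)\,(x,y)\in U_n\bigr).
\]
Thus the sentence
\[
\varphi(b)\;:\;(\exists T)\bigl(T\text{ is a slalom perfect tree}\;\land\;[T]\times[T]\se G\cup\Delta\bigr)
\]
is $\Sigma_2^1$ in the parameter $b$.

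Next, I would pass from the ground universe $V$ to the Cohen extension $V[c]$. Since $G$ is a $G_\delta$ dense planar set coded in $V$, Theorem \ref{cohen_real_extension} produces, in $V[c]$, a slalom perfect tree $T$ with $[T]\times[T]\se G\cup\Delta$; in particular $V[c]\holds\varphi(b)$. Since $V\se V[c]$ are models of ZFC with $\w_1^{V[c]}\se V$ (ordinals are absolute and Cohen forcing is ccc), Shoenfield Absoluteness transfers $\varphi(b)$ down to $V$. Therefore in $V$ there is a slalom perfect tree $T$ with $[T]\times[T]\se G\cup\Delta$, and $P=[T]$ is the required slalom perfect set.

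The only step of substance is the verification of the $\Sigma_2^1$-complexity of $\varphi(b)$: this relies on the arithmetic definability of ``slalom perfect tree'' (explicitly noted by the authors) together with the standard Borel-code representation of $G$. Once that is in hand, the argument is a purely formal combination of Theorem \ref{cohen_real_extension} and Schoenfield's theorem, with no further combinatorial work required.
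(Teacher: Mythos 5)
Your proposal is correct and follows essentially the same route as the paper: force with the Cohen poset, invoke Theorem \ref{cohen_real_extension} in the extension, cast the conclusion as a $\Sigma^1_2$-sentence with the Borel code of $G$ as parameter, and pull it back to $V$ via Shoenfield. If anything, your formula $\varphi(b)$ is stated a bit more carefully than the one in the paper, since you explicitly fold in the (arithmetic, hence absolute) ``slalom perfect tree'' predicate, whereas the paper's displayed formula only asserts the existence of a perfect set.
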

\begin{proof} Now let $V$ be a transitive model of ZFC and $W\in V$ be a $G_\delta$ dense set in $\w^\w\times \w^\w.$ Let $G\subseteq \cc$-generic filter over $V.$ Then by Theorem \ref{cohen_real_extension} there is a perfect tree $T_G$ in $V[G]$ such that $[T_G]\times[T_G]\subseteq W \cup \Delta.$ Here $W\in V$, hence the formula
	\[
		(\exists P\in \Perf(\w^\w))(\forall x,y\in P)\; (x\ne y \then (x,y)\in W)
	\]
	is $\Sigma^1_2$-sentence with a parameter from $V$. By Shoenfield Absoluteness Theorem the above formula holds in $V.$
\end{proof}
Our next result is concerned with a generalization of Egglestone Theorem. In \cite{Zeberski} such a generalization was proved using Shoenfield Absoluteness Theorem.
We will give yet another proof of this result.
In \cite{Zeberski} the author worked with a generic extension in which $\cof(\Ii)=\w_1<\c$, $\Ii\in\{\cm,\cn\}$ ($\Mm$ and $\Nn$ denote ideals of meager and null sets respectively). Our proof is based on a generic extension in which $\w_2 <\add(I)\le\c.$
\\
Let us recall that for ideals $\Ii\se P(X), \Jj\se P(Y)$ we define a Fubini product $\Ii\otimes\Jj$ of these ideals in the following way
	\[
		A\in \Ii\otimes\Jj \IFF (\exists B\in \tn{Bor}(X\times Y))(A\se B\;\land\; \{x\in X: B_x\notin\Jj\}\in\Ii),
	\]
	where $\Bor(X\times Y)$ is a family of Borel subsets of $X\times Y$ and $B_x=\{y\in Y: (x,y)\in B\}$ is a vertical section of the set $B$ (similarly we define a horizontal section $B^y$).
We say that the pair $(\Ii, \Jj)$ has a Fubini Property, if for every Borel set $B\se X\times Y$
	\[
		\{x\in X: B_x\notin\Jj\}\in \Ii \IMP \{y\in Y: B^y\notin X\}\in \Jj.
	\]
	If $(\Ii, \Ii)$ has a Fubini Property, then we will simply say that $\Ii$ has it. Let us notice that Kuratowski-Ulam Theorem and Fubini Theorem imply that $\Mm$ and $\Nn$ respectively possess the Fubini Property.
\begin{theorem}[\cite{Zeberski}, Thm 4 and Thm 5] Let $\RR\es\Ii\in\{\Mm, \Nn\}$ and $G\se \RR^2$ be a Borel set such that $G^c\in \Ii\otimes \Ii.$ Then there are two ets $B,P\se\RR$ such that $B\times P \subseteq G$, $B^c\in \Ii$ and $P\in\Perf(\RR)$.
\end{theorem}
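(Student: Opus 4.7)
The plan is to imitate the nonstandard technique of this section: force over the ground model to an extension in which the additivity of $\Ii$ is large, prove the conclusion there by a direct construction, and pull the result back via Shoenfield Absoluteness (Theorem~\ref{Schoenfield}).

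\textbf{Step 1 (choice of extension).} Let $V$ be a transitive model of \ZFC containing the Borel code of $G$. I would force over $V$ with an $\w_3$-long ccc iteration---for concreteness one forcing Martin's Axiom together with $\c=\w_3$---to obtain an extension $V[H]$ in which $\w_2<\add(\Ii)\le\c.$ The crucial property is that any union of at most $\w_2$ sets from $\Ii$ is again in $\Ii$.

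\textbf{Step 2 (sectioning in $V[H]$).} Work in $V[H]$. Since $G^c$ is Borel and $G^c\in\Ii\otimes\Ii$, the Fubini property of the pair $(\Ii,\Ii)$ yields $\{y\in\RR:(G^c)^y\notin\Ii\}\in\Ii$. The set $Y=\{y:(G^c)^y\in\Ii\}$ is therefore co-$\Ii$, and contains a Borel co-$\Ii$ subset $Y_0$.

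\textbf{Step 3 (fusion construction).} Inside $Y_0$ I would build, by recursion of length $\w_2$, a decreasing sequence of perfect trees $(T_\alpha)_{\alpha<\w_2}$ with $[T_\alpha]\se Y_0$ together with an increasing sequence of Borel $\Ii$-sets $(C_\alpha)_{\alpha<\w_2}$ so that at each successor stage the next tree is refined to absorb the current ``error'' $\bigcup_{y\in[T_\alpha]}(G^c)^y\setminus C_\alpha$ into $C_{\alpha+1}$, while at limits one takes intersections of trees and unions of hulls. The hypothesis $\add(\Ii)>\w_2$ ensures that $C=\bigcup_{\alpha<\w_2}C_\alpha\in\Ii$, hence is contained in some Borel $\Ii$-set $C'$. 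Setting $P=\bigcap_{\alpha<\w_2}[T_\alpha]$ and $B=\RR\setminus C'$ yields $B\times P\se G$ with $B^c\in\Ii$ and $P$ perfect.

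\textbf{Step 4 (absoluteness).} The conclusion is equivalent to the existence of a Borel code $c$ with $D_c\in\Ii$ and a code $t$ for a perfect set $P_t$ satisfying $(\forall x,y)\bigl(x\notin D_c\land y\in P_t\to(x,y)\in G\bigr)$. This is a $\Sigma^1_2$ sentence whose only parameter is the Borel code of $G$, and it holds in $V[H]$; so by Schoenfield Absoluteness (Theorem~\ref{Schoenfield}) it holds in $V$.

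\emph{Main obstacle.} Step~3 is the delicate part, and it is precisely where $\add(\Ii)>\w_2$ is indispensable. The previous argument in \cite{Zeberski} circumvented it by exploiting a small cofinal family of $\Ii$-sets ($\cof(\Ii)=\w_1$); here one must instead arrange the fusion so that each refinement enlarges the hull by only one set in $\Ii$ while preserving the perfect structure, and rely on additivity to keep the $\w_2$-accumulation inside $\Ii$. Getting these two requirements---preserving enough branching in the tree at each refinement, and genuinely killing all ``bad'' sections by stage $\w_2$---to coexist is the real technical heart of the proof.
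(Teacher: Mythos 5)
Your Steps 1, 2, and 4 match the paper's strategy exactly: pass to an extension $V'$ with $\w_2<\add(\Ii)$, invoke the Fubini Property to get a co-$\Ii$ set of good sections, and pull the $\Sigma^1_2$ conclusion back by Shoenfield. The divergence, and the problem, is Step 3.

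Your proposed $\w_2$-length fusion of perfect trees has a genuine gap. A decreasing $\w_2$-sequence of perfect trees has no reason to have a perfect (or even nonempty) intersection: any honest fusion argument works by freezing longer and longer initial segments, so it can only run for $\w$ stages before the freezing levels exhaust $\w$. At limit ordinals of uncountable cofinality there is no mechanism to keep $\bigcap_{\alpha<\lambda}[T_\alpha]$ perfect, and at stage $\w_2$ the set $P$ you define could easily be a singleton or empty. Moreover, the ``error'' $\bigcup_{y\in[T_\alpha]}(G^c)^y$ is a union over a perfect set of $\Ii$-sets, which need not belong to $\Ii$ at all, so it is not clear what ``absorbing'' it into a single $C_{\alpha+1}\in\Ii$ would even mean. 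You flag this as the technical heart, but as stated it is not a heart that beats.

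The paper sidesteps fusion entirely and the resulting argument is much shorter. Working in $V'$, let $Z=\{x:\ (G^\star)_x^c\in\Ii\}$, which is co-$\Ii$ by Fubini, hence of size $\c\ge\w_3$. Pick \emph{any} subset $T\se Z$ of cardinality exactly $\w_2$. Since $\w_2<\add(\Ii)$, the set $\bigcup_{t\in T}(G^\star_t)^c$ is in $\Ii$, so there is a Borel $B$ with $B^c\in\Ii$ and $B\se\bigcap_{t\in T}G^\star_t$. Now set $A=\{x:\ B\se G^\star_x\}$; this is coanalytic (a $\forall$ over a Borel condition), and $T\se A$, so $|A|\ge\w_2>\w_1$. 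The ZFC theorem that every $\Pi^1_1$ set of cardinality $>\w_1$ contains a perfect subset then hands you $P\in\Perf(\RR)$ with $P\se A$, whence $B\times P\se G^\star$. No recursion, no fusion; the only use of $\add(\Ii)>\w_2$ is to intersect $\w_2$ sections, and the perfect set comes for free from the classical dichotomy for coanalytic sets. You should replace Step 3 by this argument; the rest of your proposal is sound.
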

\begin{proof} Let $V$ be a universe of ZFC such that $G\in V$ and let $V'$ be its extension such that $\w_2 < add(\Ii).$ Let $b\in\w^\w\cap V$ be a Borel code for $G.$ Let $G^\star$ be a Borel subset of $\bbr^2$ decoded by $b$ in $V'.$ By the absolutness of Borel codes of sets from $\Ii$ it is the case that ${B^\star}^c$ is in $\Ii$ in $V'.$
\\
We work in $V'$ universe. Let $Z=\{ x\in \bbr:\; G_x^{\star c} \in \Ii\}$. By the Fubini Property $Z^c\in \Ii$. Then $|Z| = \c\ge \w_3.$ Let us choose any set $T\se Z$ of cardinality $\omega_2$. Since $\w_2 < \add(\Ii)$, the complement of a set $\bigcap_{t\in T} G^\star_t$ is in $\Ii.$ Let $B\in \Bor(\bbr)$ such that $B^c\in \Ii$, $B\subseteq \bigcap_{t\in T} G^\star_t$ and consider a set
	$
	A = \{ x\in\bbr:\; B\subseteq G^\star_x \}.
	$
	Clearly, $A$ is coanalytic. Since $T$ has a size $\w_2$ and $T\subseteq A$,  $A$ contains a perfect subset $P.$ It implies that $V'$ is a model for the following formula
	\[
	(\exists B\in \Bor(\bbr))(\exists P\in \Perf(\RR))(\forall x,y\in\bbr)((x,y) \in B\times P \IMP (x,y)\in G^\star). 
	\]
	It is $\Sigma^1_2$, hence by Shoenfield Absoluteness Theorem it also holds in $V$.
\end{proof}

%


\begin{thebibliography}{9}
    \bibitem{BartJud} T. Bartoszyński, H. Judah, Set theory: On the structure of the real line, A K Peters. Ltd., 1995.
    \bibitem{Eggleston} H. G. Eggleston, Two measure properties of Cartesian product sets, The Quarterly Journal of Mathematics, 5, 1954, 108–115.
    \bibitem{Jech} T. Jech, Set theory, millenium edition, Springer Monographs in Mathematics, Springer-Verlag, 2003.
    \bibitem{Kech} A. S. Kechris, Classical descriptive set theory, Graduate Texts in
Mathematics 156, Springer-Verlag New York, Inc., 1995.
    \bibitem{KorchWeiss} M. Korch, T. Weiss, On the class of perfectly null sets and its transitive version, Bulletin Polish Acad. Sci. Math., 64, 2016, 1-20.
    \bibitem{Mycielski} J. Mycielski, Algebraic independence and measure, Fundamenta Mathematicae 61, 1967, 165-169.
    \bibitem{Zeberski} Sz. Żeberski, Nonstandard proofs of Egglestone like theorems, Proceedings of the Ninth Topological Symposium, 2001, 353-357.
\end{thebibliography}
\end{document}